\newtheorem{theorem}{Theorem}[section]
\newtheorem{corollary}[theorem]{Corollary}
\newtheorem{proposition}[theorem]{Proposition}
\newtheorem{example}[theorem]{Example}
\newtheorem{lemma}[theorem]{Lemma}
\newtheorem{remark}[theorem]{Remark}
\def\qed{\vbox{\hrule
 \hbox{\vrule\hbox to 5pt{\vbox to 8pt{\vfil}\hfil}\vrule}\hrule}}
\def\endproof{\unskip \nobreak \hskip0pt plus 1fill \qquad \qed \par}
\newcommand{\mb}{\mathbb}
\newcommand{\mc}{\mathcal}
\begin{document}

\title{Sign-restricted matrices of $0$'s, $1$'s, and $-1$'s}

\author{
 Richard A. Brualdi\footnote{Department of Mathematics, University of Wisconsin, Madison, WI 53706, USA. {\tt brualdi@math.wisc.edu}} \\
 Geir Dahl\footnote{Department of Mathematics,  
 University of Oslo, Norway.
 {\tt geird@math.uio.no.} Corresponding author.}}
\date{2 January 2021}
\maketitle

\begin{abstract}  
  We study {\em sign-restricted matrices} (SRMs), a class of rectangular $(0, \pm 1)$-matrices generalizing the  alternating sign matrices (ASMs). In an SRM  each partial column sum, starting from row 1, equals 0 or 1, and each partial row sum, starting from column 1, is nonnegative. We determine the maximum number of nonzeros in SRMs and  characterize the possible row and column sum vectors. Moreover, a number of results on interchange operations are shown, both for SRMs and, more generally, for $(0, \pm 1)$-matrices. The Bruhat order on ASMs can be extended to SRMs with the result a distributive lattice. Also,  we study polytopes associated with SRMs and some relates decompositions. 
 \end{abstract}


\noindent {\bf Key words.} Sign matrix, alternating sign matrix, sign-restricted matrix, polytope, Bruhat order.

\noindent
{\bf AMS subject classifications.} 05A18, 05B20, 06A07, 15B35, 15B36.

\section{Introduction}
 \label{sec:intro}
 
 Let $m$ and $n$ be positive integers and let ${ \Delta}_{m,n}$ be the set of all $m\times n$ matrices each of whose entries is $0$, $+1$, or $-1$, that is, $(0,\pm 1)$-matrices. Perhaps the best known class of $(0,\pm 1)$-matrices are the $n\times n$ {\it alternating sign matrices} (ASMs) \cite{BN,BD1,BD2,DB,St}. These
  are square matrices in which the $\pm 1$'s in each row and column alternate beginning and ending with a $+1$, and hence for which all row and column sums equal 1. The set of $n\times n$ ASMs is denoted by ${\mathcal A}_n$. In \cite{JA,SS} a generalization of ASMs, called sign matrices, has been defined and these matrices can be  rectangular. We prefer to call these matrices ``sign-restricted matrices'' to emphasize the restrictions on the signs, and they are defined next.

  A {\em sign-restricted matrix} (abbreviated here to SRM)  is  an $m\times n$ $(0,\pm 1)$-matrix $A$ such that each partial column sum, starting from row 1, equals 0 or 1, and each partial row sum, starting from column 1, is nonnegative.  This definition arose in   \cite{JA}  where they are shown to be in bijection with combinatorial objects called semistandard Young tableaux, and they were further investigated in \cite{SS}.   Row 1 and column 1 of an SRM can only contain 0's and $+1$'s and, in particular, column 1 can contain only one $+1$.
 Also the $+1$'s and $-1$'s in a column alternate, where the corresponding full column sum is $1$ or $0$ depending on whether its last nonzero entry, if any,  is a 1 or a $-1$.  Note that the transpose of an SRM need not be an SRM.  If the rows of $A$
satisfied the same property as the columns and $A$ were a square
matrix, then $A$ would be an ASM.   Any leading $r\times s$ submatrix of an ASM is an SRM.\footnote{Of course, the rows of a leading submatrix of an ASM satisfy the same properties as its columns, and this is not reflected in the definition of a sign-restricted matrix, since a weaker condition, namely partial row sums are non-negative, is used.}  
   Unlike ASMs, the last nonzero entry in each row and column of an SRM may be a $-1$. 
 We denote the set of $m\times n$ SRMs  by ${\mathcal S}_{m,n}$. The subset of ${\mathcal S}_{m,n}$ consisting of those matrices with no $-1$'s (so $(0,1)$-matrices) is denoted by  ${\mathcal S}_{m,n}^+$.

  A $1\times n$ SRM is just a $(0,1)$-vector. Since an $m\times n$ SRM has a column sum vector which is a $(0,1)$-vector, an SRM can be considered as a generalization of a $(0,1)$-vector. 
 
 \begin{example}\label{ex:onea}{\rm Examples of SRMs are
 \[\left[
 \begin{array}{rrr}
 0&1&1\\
 1&-1&0\\
 0&1&-1\end{array}\right],
 \left[\begin{array}{rrrr}
 0&1&0&1\\
 1&-1&1&-1\\
 0&1&-1&1\end{array}\right], \mbox{ and }
 \left[\begin{array}{rrr}
 0&1&1\\ 1&-1&0\\0&1&-1\\ 0&0&1\end{array}\right] .\]
  A zero matrix is  a sign-restricted matrix as is every permutation matrix and subpermutation matrix. 
 }\hfill{$\Box$}\end{example}

 Let $A$ be an $m\times n$ SRM with row sum vector $R=(r_1,r_2,\ldots,r_m)$ and column sum vector $S=(s_1,s_2,\ldots,s_n)$. As remarked above, $S$ is a $(0,1)$-vector, but $R$ may have integer entries larger than 1. If each partial row sum of $A$ equals $0$ or $1$, then $R$ is also a $(0,1)$-vector and  the transpose of $A$ is also a sign-restricted matrix.  In general, the number of $1$'s in $S$ equals $\sum_{i=1}^m r_i$.  For instance, the row sum vectors of the SRMs  in Example \ref{ex:onea} are $(2,0,0)$, $(2,0,1)$, and $(2,0,0,1)$, respectively. The column sum vectors are $(1,1,0)$, $(1,1,0,1)$, and $(1,1,1)$, respectively. The   set of all SRMs with row sum vector $R$ and column sum vector $S$ is denoted by ${\mathcal S}(R,S)$, or by ${\mathcal S}_{m,n}(R,S)$ if we want to emphasize the dimensions of $R$ and $S$. Similarly we use the notations ${\mathcal S}^+(R,S)$ and ${\mathcal S}_{m,n}^+(R,S)$ to denote the SRMs with nonnegative entries
 
 Notation: We let $M_{m,n}$ denote the set of real $m \times n$ matrices, and simply write $M_n$ when $m=n$. 
 
The remaining paper is organized as follows. Section \ref{sec:basic-prop}
considers the maximum number of nonzeros in SRMs and  characterizes the possible row and column sum vectors of SRMs. Also, we study a connection to another class of $(0, \pm 1)$-matrices containing the incidence matrices of directed graphs. In Section \ref{sec:interchangeSRM} we consider the class of SRMs with specified row and column sum vectors, and their connectivity properties under interchanges are investigated.  Section \ref{sec:Bruhat} is devoted to the Bruhat order for  the class $\mc{S}_{m,n}$;  we show that this determines a distributive lattice, and it is the Dedekind-MacNeille completion of the Bruhat order restricted to  $\mc{S}^+_{m,n}$.  In Section \ref{sec:polytope} we study a polytope associated with SRMs and some relates decompositions.

 \section{Some basic properties of SRMs}
 \label{sec:basic-prop}
 
 First we consider the row and column sum vectors of an SRM.

\begin{proposition}
 \label{pr:row-sums1}
  Let $R=(r_1, r_2, \ldots, r_m)$ and $S=(s_1, s_2, \ldots, s_n)$ be nonnegative integral vectors, respectively. Then $R$ and $S$ are the row sum and column sum vectors, respectively,  of an SRM $A$ of size $m \times n$ if and only if $S$ is a $(0,1)$-vector and 
  \begin{equation}
   \label{eq:RS-cond}
    \sum_{i=1}^m r_i=\sum_{j=1}^n  s_j.
  \end{equation}
  In fact, $A$ can be taken to be a matrix in ${\mathcal S}_{m,n}^+$.
\end{proposition}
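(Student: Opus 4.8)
The plan is to prove necessity quickly from the definition and to handle sufficiency by an explicit construction that already lives in $\mathcal{S}_{m,n}^+$, which simultaneously establishes the final sentence of the statement.

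For necessity, suppose $A$ is an $m\times n$ SRM with row sum vector $R$ and column sum vector $S$. Reading each column from the top, all of its partial sums lie in $\{0,1\}$; in particular the full column sum $s_j$ lies in $\{0,1\}$, so $S$ is a $(0,1)$-vector. For \eqref{eq:RS-cond}, I would simply note that $\sum_{i=1}^m r_i$ and $\sum_{j=1}^n s_j$ both equal the sum of all entries of $A$, and are therefore equal.

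For sufficiency, the key first step is the observation that a $(0,1)$-matrix is an SRM if and only if each of its columns contains at most one $1$. Indeed, the partial row-sum condition is automatic when all entries are nonnegative, while for a column consisting of $0$'s and $1$'s the requirement that every partial column sum be $0$ or $1$ is exactly the statement that the column has at most one $1$. Consequently, producing a matrix in $\mathcal{S}_{m,n}^+$ with the prescribed sums reduces to a placement problem: writing $N=\sum_{i=1}^m r_i=\sum_{j=1}^n s_j$, there are exactly $N$ columns with $s_j=1$, and I must place a single $1$ in each of these so that row $i$ ends up with exactly $r_i$ of them.

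This placement is routine. I would list the $N$ columns having $s_j=1$ and assign the first $r_1$ of them their $1$ in row $1$, the next $r_2$ in row $2$, and so on; since $r_1+\cdots+r_m=N$ the list is exhausted and row $i$ receives precisely $r_i$ ones. The resulting matrix has column sum vector $S$ and row sum vector $R$ by construction, has at most one $1$ per column and hence is an SRM by the reduction above, and contains no $-1$, so it belongs to $\mathcal{S}_{m,n}^+$ as required. There is no genuine obstacle in this argument; the only point deserving care is the reduction in the sufficiency direction, namely recognizing that for a $(0,1)$-matrix the SRM constraints collapse to ``at most one $1$ per column,'' after which the construction is a trivial realization of the degree data.
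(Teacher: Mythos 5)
Your proof is correct and follows essentially the same route as the paper's: both verify necessity directly from the definition and establish sufficiency by placing a single $1$ in each column with $s_j=1$, distributed in a staircase pattern so that row $i$ receives $r_i$ of them. The only difference is that you make explicit the (true and worth stating) observation that a $(0,1)$-matrix is an SRM precisely when each column contains at most one $1$, a point the paper leaves implicit.
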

\begin{proof}
 The conditions are clearly necessary, as an SRM has column sums 0 or 1, and (\ref{eq:RS-cond}) is trivial. Conversely, assume $S$ is a $(0,1)$-vector and (\ref{eq:RS-cond}) holds. Let $k= \sum_{i=1}^m r_i=\sum_{j=1}^n  s_j$. Then $k \le n$ as $S$ is a $(0,1)$-vector. 
Initially, let $A =[a_{ij}]$ be the $(0,1)$-matrix with a 1 in the  first row in those columns for which $s_j=1$, while all other entries are zero. Thus $A$ has column sum vector $S$. Next, we modify $A$ by shifting the ones in the first row to other rows so as to obtain the row sum vector $R$; this may be done as $\sum_{i=1}^m r_i=k$. 
For instance, this may be done so that the ones are in a ``staircase'' pattern in the sense that whenever $a_{ij}=a_{kl}=1$ and $j<l$, then $i\le k$. The resulting matrix is a $(0,1)$-matrix, so a matrix in ${\mathcal S}_{m,n}^+$.
\end{proof}

 For $m$ and $n$ fixed, the maximum of  $\sum_{i=1}^n r_i$  in an $m\times n$ SRM is $n$. This follows from Proposition \ref{pr:row-sums1} as each column sum is 0 or 1 so that $\sum_i r_i=\sum_j s_j \le n$. This bound is attained by taking $S$ be the all ones vector and $R=(n, 0, \ldots, 0)$.

 The first column of an $m\times n$ SRM can contain only one nonzero and this nonzero is a 1. The second column can then contain at most three nonzeros, the third column at most five nonzeros, and so on,  until we reach column $\lceil\frac{m+1}{2}\rceil$ which can contain 
 $m$ nonzeros. After that we can alternate between columns containing $(m-1)$ and $m$ nonzeros. This construction gives an $m\times n$ SRM with the maximum number $\zeta_{m,n}$ of nonzeros.
 
 \begin{example}{\rm \label{ex:max}
 We  give two examples to illustrate how the maximum is obtained.
 \[(m=6,n=8): \left[\begin{array}{r|r|r|r|r|r|r|r}
 &&&1&&1&&1\\ \hline
 &&1&-1&1&-1&1&-1\\ \hline
 &1&-1&1&-1&1&-1&1\\ \hline
 1&-1&1&-1&1&-1&1&-1\\ \hline
 &1&-1&1&-1&1&-1&1\\ \hline
 &&1&-1&1&-1&1&-1\end{array}\right]\; \;  (\zeta_{6,8}=37),\]
 whose  column sum vector is $(1,1,1,0,1,0,1,0)$,
 and
 
 \[(m=9,n=11): \left[\begin{array}{r|r|r|r|r|r|r|r|r|r|r}
 &&&&1&&1&&1&&1\\ \hline
 &&&1&-1&1&-1&1&-1&1&-1\\ \hline
 &&1&-1&1&-1&1&-1&1&-1&1\\ \hline
 &1&-1&1&-1&1&-1&1&-1&1&-1\\ \hline
 1&-1&1&-1&1&-1&1&-1&1&-1&1\\ \hline
 &1&-1&1&-1&1&-1&1&-1&1&-1\\ \hline
 &&1&-1&1&-1&1&-1&1&-1&1\\ \hline
 &&&1&-1&1&-1&1&-1&1&-1\\ \hline
 &&&&1&-1&1&-1&1&-1&1
 \end{array}\right]\;  (\zeta_{9,11}=76).\]
 }\hfill{$\Box$}
 \end{example}
 
 The construction used in Example \ref{ex:max} can be used to give a formula for $\zeta_{m,n}$ which, 
 in the case of $m=n$, is very simple.

 \begin{theorem}\label{th:max} Let $m$ and $n$ be  positive integers. Then
 \begin{equation}
 \label{eq:max-mn}
 \zeta_{m,n}=
 \left\{
 \begin{array}{ll}
 mn-m^2/4-\lceil(1/2)(n-m/2-1)\rceil, \quad &\mbox{ if $m$ is even},\\
 \\
 mn-(m^2-1)/4-\lceil(1/2)(n-(m+1)/2)\rceil , \quad &\mbox{ if $m$ is odd}.\\
 \end{array}
 \right.
 \end{equation}
 
In particular, if $m=n$, 
 \begin{equation}
 \label{eq:max}
 \zeta_{n,n}=
 \left\{
 \begin{array}{ll}
 \frac{3n^2-n}{4}, \quad &\mbox{ if $n\equiv 0\mbox{ or }3\bmod 4$},\\
 \\
  \frac{3n^2-n+2}{4}, \quad &\mbox{ if $n\equiv 1\mbox{ or } 2\bmod 4$}.\\
 \end{array}
 \right.
 \end{equation}
 \end{theorem}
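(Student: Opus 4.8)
The plan is to establish matching upper and lower bounds on the number of nonzeros and then reduce the resulting finite sum to the closed forms (\ref{eq:max-mn}) and (\ref{eq:max}). The lower bound is already supplied by the explicit construction in Example~\ref{ex:max}, so the content is the upper bound together with the arithmetic.

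First I would formalize the per-column estimate that underlies the construction. For a fixed column $j$, let $\sigma_i(j-1)$ denote the partial row sum of row $i$ through column $j-1$. A $-1$ in position $(i,j)$ forces $\sigma_i(j-1)\ge 1$, since adjoining this $-1$ must keep the partial row sum nonnegative. Because the nonzeros within a column alternate in sign starting with $+1$, the number of nonzeros in column $j$ is at most twice the number of its $-1$'s plus one. Since each column sum is $0$ or $1$ we have $\sum_{i=1}^m \sigma_i(j-1)=\sum_{l=1}^{j-1}s_l\le j-1$, so at most $j-1$ rows can carry a $-1$ in column $j$; hence column $j$ has at most $2(j-1)+1=2j-1$ nonzeros, and trivially at most $m$. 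This gives $c_j\le\min(2j-1,m)$, matching the counts $1,3,5,\dots$ up to $m$.

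The delicate point, which I expect to be the main obstacle, is that these per-column maxima cannot all be attained at once, so that the correct upper bound is the alternating pattern $1,3,\dots,m,m-1,m,m-1,\dots$ rather than $\sum_j\min(2j-1,m)$. The mechanism is budget depletion: saturating a column to $m$ nonzeros (a full alternating column) drives down the partial row sums in the even-indexed rows, so a second full column would demand surplus that the global budget $\sum_i\sigma_i(j-1)\le j-1$ cannot yet supply. I would make this precise by proving, by induction on $j$, that the number of nonzeros in the first $j$ columns is at most the $j$-th partial sum of the sequence $1,3,\dots,m,m-1,m,\dots$; the inductive step combines the budget inequality above with the row-ballot condition (nonnegativity of partial row sums) to limit how many new $-1$'s, and hence nonzeros, column $j$ can add relative to the surplus accumulated so far. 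Equality holds throughout for the matrix of Example~\ref{ex:max}, so the bounds meet and $\zeta_{m,n}$ equals the sum of the construction's column counts.

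With the counts identified, the evaluation of $\zeta_{m,n}$ splits into a ramp and an alternating tail. For $m$ even the ramp is $1+3+\cdots+(m-1)=m^2/4$ over columns $1,\dots,m/2$, and the tail over columns $m/2+1,\dots,n$ is $m,m-1,m,\dots$, contributing $mL-\lfloor L/2\rfloor$ with $L=n-m/2$; for $m$ odd the ramp is $1+3+\cdots+m=(m+1)^2/4$ over columns $1,\dots,(m+1)/2$, and the tail is $m-1,m,\dots$, contributing $mL'-\lceil L'/2\rceil$ with $L'=n-(m+1)/2$. Adding ramp and tail and using $\lfloor x/2\rfloor=\lceil (x-1)/2\rceil$ for integers $x$ yields (\ref{eq:max-mn}) in each parity case, the only care being the bookkeeping of floors and ceilings. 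Finally, setting $m=n$ and reducing modulo $4$ gives (\ref{eq:max}): $n$ even splits into $n\equiv 0,2$ and $n$ odd into $n\equiv 1,3$, and in each of the four residue classes the ceiling term evaluates explicitly, collapsing the expression to $(3n^2-n)/4$ for $n\equiv 0,3$ and to $(3n^2-n+2)/4$ for $n\equiv 1,2$. This last reduction is routine but must be checked case by case.
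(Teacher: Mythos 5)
Your overall plan — lower bound from the construction of Example \ref{ex:max}, a per-column upper bound, and then the closed-form arithmetic — is the same as the paper's, and indeed the paper itself dismisses the whole argument as ``a straightforward computation using the above construction,'' so you are being more careful than the source. Your per-column bound $c_j\le\min(2j-1,m)$ is correctly derived (a $-1$ in $(i,j)$ forces $\sigma_i(j-1)\ge 1$, the column alternates, and $\sum_i\sigma_i(j-1)\le j-1$), and I checked your ramp-plus-tail evaluation in both parities and the reduction modulo $4$: that part is right.

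The gap is exactly where you flag ``the delicate point,'' and the mechanism you propose to close it does not suffice as stated. The aggregate budget inequality $f_j\le\sum_{l<j}s_l\le j-1$ together with $z_j\le m$ does \emph{not} force the alternating tail: for $m=6$, $n=8$ the relaxed problem (maximize $\sum_j(2f_j+s_j)$ subject to $s_j\in\{0,1\}$, $f_j\le\sum_{l<j}s_l$, $2f_j+s_j\le 6$) admits the column profile $1,3,5,6,6,6,6,6$ with total $39>37=\zeta_{6,8}$, so any induction whose step uses only the \emph{total} surplus $\sum_i\sigma_i(j-1)$ cannot reach the stated formula. Moreover, your stated local obstruction (``a second full column would demand surplus that the global budget cannot yet supply'') is false in general: the $2\times 4$ SRM with columns $e_2,e_2,(1,-1)^{\mathsf T},(1,-1)^{\mathsf T}$ is valid and has two adjacent full columns, so extremal matrices need not follow the column profile $1,3,\dots,m,m-1,m,\dots$ at all, and the inequality $N_j\le T_j$ must be proved as a genuinely amortized statement using the \emph{per-row} distribution of the surpluses $\sigma_i(j-1)$ (a full column forces every even-indexed nonzero row to spend one unit, and these rows can only be recharged by columns that are not full). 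Until that inductive step is actually written down, the upper bound — which is the entire content of the theorem beyond the construction — remains unproved; to be fair, the paper leaves the same step implicit.
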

 
 \begin{proof} This is 
 a straightforward computation using the above construction.
 \end{proof}

The maximum difference between the number of 1's and the number of $-1$'s in an $m\times n$ SRM  is $n$. In fact, in each column, due to the alternating property,  the difference between the number of 1's and the number of $-1$'s is either 0 or 1. Thus, for a matrix in $\mc{S}_{m,n}$, the maximum difference between the number of 1's and the number of $-1$'s is $n$, and this is attained for the  matrix whose first row is the all ones vector, and all other entries are 0.

 

  The incidence matrices associated with directed graphs give a well-known class of $(0, \pm 1)$-matrices. Let $D=(V,E)$ be a directed graph with at least one (directed) edge and vertices $\{1,2,\ldots,n\}$. An edge from a  vertex $i$ to a vertex $j$ is denoted by $(i,j)$; we assume that $D$ does not have any loops so that
  $i\ne j$.  The incidence matrix $M$ of $D$ then has rows corresponding to its vertices (in some order) and columns corresponding to  its edges (again, in some order). The column corresponding to the edge $(i,j)$ has a 1 in row $i$, a $-1$ in row $j$, and otherwise only zeros. In particular, each column contains exactly two nonzeros. The first column of $M$ contains a $-1$ no matter how the edges of $D$ are ordered, and so some row will begin with a $-1$, and  hence $M$ is never an SRM. We can remedy this by using loops.

   Let a loop of a digraph at a vertex $i$  correspond to a column in the incidence matrix with a 1 in row $i$ and otherwise all 0's.\footnote{This
 is justified as the loop meets only the vertex $i$ and it allows one to identify from the incidence matrix which vertices have a loop.} 
 So putting  loops at all vertices and letting these loops correspond to the first columns of the incidence matrix (so the incidence matrix begins with the identity matrix $I_n$), then no row will begin with a $-1$.  But this does not guarantee that the incidence matrix is an SRM under some ordering of the other edges. We now determine when including certain loops leads to an SRM. 
   
   Let $S\subseteq V$ and let $D(S)$ be the digraph obtained from $D$ by putting a loop at each vertex in $S$. Let $M(S)$
   be its  incidence matrix obtained from the incidence matrix $M$ of  $D$ by  augmenting $M$ by distinct unit vectors corresponding to vertices in $S$ {\it where these unit vectors come first}.  We call $M(S)$ the {\em generalized incidence matrix} for $D=(V,E)$ and $S$. Let $d^+(v)$ (resp. $d^-(v)$) denote the number of edges of $D$ with $v$ as tail (resp. head).

\begin{example}{\rm 
\label{ex:inc-matrix}
 Let $D$ be the digraph with vertices $v_i$ ($i \le 4$) and edges $(v_1,v_2)$, $(v_2,v_3)$ and $(v_2,v_4)$. Let $S=\{v_3,v_4\}$. The generalized incidence matrix, using vertex order according to their index and edge order $(v_2,v_3)$, $(v_1,v_2)$, $(v_2,v_4)$ is 
 \[
 \left[
 \begin{array}{rrrrr}
 0&0&0&1&0\\
 0&0&1&-1&1\\
 1&0&-1&0&0\\
 0&1&0&0&-1
 \end{array}\right].
 \]
 This is an SRM. \endproof
 } 
 \end{example} 
  
\begin{theorem}
 \label{thm:incidence-matrix}
  Let $M(S)$ be the generalized incidence matrix associated with $D=(V,E)$ and $S \subseteq V$ where $D$ is a directed graph with at least one edge.  Then  the rows and columns of $M$ can be reordered so that the resulting matrix is an SRM if and only if the following holds
  \begin{itemize}
   \item[\rm (i)] $D$ is acyclic, 
  
   \item[\rm (ii)]  $d^-(v)-d^+(v) \le 1$ for each vertex $v$, and  if $d^-(v)=d^+(v)+1$ for some vertex $v$, then $v \in S$.
  \end{itemize}
\end{theorem}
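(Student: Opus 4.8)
The plan is to treat necessity and sufficiency separately, the real content lying in the sufficiency construction. Throughout I would use that each edge column (for an edge $(i,j)$) has a single $+1$ in the tail row $i$ and a single $-1$ in the head row $j$, while each loop column is a unit vector; hence in any reordering, row $v$ carries one $+1$ per out-edge of $v$, one $-1$ per in-edge of $v$, and one further $+1$ exactly when $v\in S$.

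For necessity, suppose some reordering of the generalized incidence matrix $M(S)$ is an SRM $A$. Since every column of an SRM has partial column sums $0$ or $1$, the first nonzero entry of each column (reading downward) is a $+1$; applied to the edge columns this forces the tail to lie above the head in the chosen row order, so that order is a topological order of $D$ and (i) follows. For (ii) I would simply read off the full row sum of row $v$, namely $d^+(v)-d^-(v)+\epsilon_v$ with $\epsilon_v=1$ if $v\in S$ and $\epsilon_v=0$ otherwise; being the last partial row sum of $A$ it is nonnegative, which gives $d^-(v)-d^+(v)\le\epsilon_v\le 1$ and forces $v\in S$ whenever equality holds.

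For sufficiency I would fix a topological order $v_1,\dots,v_n$ of $V$ (every edge $(v_a,v_b)$ has $a<b$) and use it as the row order; then every edge column has its $+1$ above its $-1$ and every loop column is a unit vector, so the partial column sums lie in $\{0,1\}$ automatically, for \emph{any} column order. The remaining task, ordering the columns so that all partial row sums stay nonnegative, is the crux, and the device I would use is to place the loop columns first and then order the edge columns by head in reverse topological order (all edges with head $v_n$ first, then head $v_{n-1}$, and so on, ties broken arbitrarily). The purpose is that in any single row $v_k$ all the $+1$'s then precede all the $-1$'s: the $-1$'s come from in-edges, i.e.\ columns with head $v_k$, whereas the out-edges $(v_k,v_m)$ have $m>k$ and so their head-blocks appear earlier, and the loop at $v_k$ (if present) is earlier still. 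Consequently the partial row sum in row $v_k$ rises and then falls to its final value $d^+(v_k)-d^-(v_k)+\epsilon_{v_k}$, so its minimum over the whole row equals that final value, which is nonnegative by (ii).

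The main obstacle is precisely this sufficiency step: a single column order must work for all rows simultaneously. I expect the reverse-topological-by-head ordering to be the right idea because it decouples the rows, reducing the global nonnegativity of all partial row sums to the per-vertex inequality already supplied by (ii); verifying the column-sum condition and checking the two cases of (ii) ($v_k\notin S$ versus $v_k\in S$) are then routine.
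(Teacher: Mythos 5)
Your proof is correct, and the necessity half is essentially the paper's argument (you phrase (i) slightly more directly: the row order must be a topological order since each edge column's first nonzero must be its $+1$, rather than the paper's contradiction via a cycle). The sufficiency half, however, is a genuinely different and cleaner construction. The paper also fixes a topological row order, but then builds the column order iteratively: it repeatedly picks a vertex $v_j$ with $d^+(v_j)=0$, $d^-(v_j)=1$ ($j$ maximal), takes a maximal directed path ending at $v_j$, lists that path's edges starting from the one with head $v_j$, removes the path, and recurses; nonnegativity of partial row sums is then checked path by path. Your single global rule --- loop columns first, then edge columns grouped by head with heads in reverse topological order --- makes every row unimodal (all its $+1$'s precede all its $-1$'s), so the minimum partial row sum in each row is just the full row sum, and condition (ii) finishes the argument in one line. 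This buys a non-recursive construction, eliminates the existence claims the paper's peeling step relies on (e.g.\ that a vertex with $d^+=0$, $d^-=1$ exists at each stage, which itself needs (ii)), and makes the reduction from ``all partial row sums nonnegative'' to ``all row sums nonnegative'' completely transparent. Both proofs agree that the column condition is automatic for any column order once the rows are in topological order.
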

\begin{proof}
 First assume that the rows and columns of $M$ are ordered so that $M(S)$ is an SRM. Suppose to the contrary that $D$ contains a directed cycle $C$, and let $i$ be the first row in $M$ such that the corresponding vertex lies in $C$. Moreover, $C$ contains exactly two edges incident to vertex $i$, one with $i$ as its head, and one with $i$ as its tail. For each of these two edges, the other end vertex (different from $i$) corresponds to some row below row $i$. Therefore, since each column of $M$ contains exactly two nonzero entries, one of the columns of $M$ (and so of $M(S)$)  corresponding to these two edges must have its first nonzero equal to $-1$; contradicting that $M(S)$ is a SRM. This shows that $D$ must be acyclic. 
 
  Next, assume there is a vertex $v$ of $D$ with $d^-(v)-d^+(v) \ge 2$, i.e., the indegree is at least 2 larger than the outdegree. But then the row sum of $M$ in that row is at most $-2$ and hence the corresponding row sum of $M(S)$ is negative. Therefore $d^-(v)-d^+(v) \le 1$ for each vertex $v$. Moreover,  if $d^-(v)=d^+(v)+1$ for some vertex $v$, then $v \in S$; otherwise the row sum would be negative. So, (ii) holds.
  
  Conversely, assume conditions (i)--(ii) hold. As $D$ is acyclic, its vertices, and the rows of $M$,  may be ordered $v_1, v_2, \ldots, v_n$ such that each edge has the form $(v_i,v_j)$ for some $i<j$. 
Next we describe a suitable ordering of the columns of $M$ so that $M(S)$ is a SRM.    Choose $v_j$ with $d^+(v_j)=0$, $d^-(v_j)=1$ and $j$ maximal with this property; such a vertex must exist. Choose a directed path $P$ in $D$ with a maximal number of edges and with terminal vertex $v_j$. Order the edges consecutively along the path $P$  with the edge having $v_j$ as  its head as the first one. Then order the  corresponding columns of $M$ in a similar order. In the submatrix defined by the columns corresponding to $P$ each row contains a $1$ and a $-1$, in that order, except the row corresponding to $v_j$ where the only nonzero is a $-1$. However, that row has a $1$ in an earlier column, as  unit elements in $S$ are  first. 
Now, remove the edges of $P$  from $D$ and repeat this procedure in the resulting graph, by choosing such a path and order the corresponding columns accordingly. Then the resulting matrix is an SRM, as desired.  
 \end{proof}
 
Example \ref{ex:inc-matrix} illustrates Theorem \ref{thm:incidence-matrix} and the vertex and edge orders used in the proof.  
In summary, the theorem asserts that if a digraph $D$ satisfies (i) and (ii)  one may insert as the initial columns in its incidence matrix a set of distinct unit columns so that the resulting matrix in a SRM. Moreover, the unit columns needed are identified.

 \section{Interchanges}
 \label{sec:interchangeSRM}
 
 In this section we first consider certain connectivity properties of the class $\mc{S}_{m,n}$. Define  
 \begin{equation}
 \label{eq:E}
 E=\left[
  \begin{array}{rr}
     1 & -1 \\
     -1 & 1  
  \end{array}
 \right].
 \end{equation}
 Let $A$ be an SRM and let $B$ be obtained from $A$ by adding or subtracting $E$ in some $2 \times 2$ submatrix of $A$ (not necessarily with consecutive rows and consecutive columns). We call this operation an {\em interchange}. Whether or not  $B$ is an SRM depends on $A$ and the chosen submatrix. In any case $R(A)=R(B)$ and $S(A)=S(B)$, where $R(A)$ (resp., $S(A)$) is the  row sum (resp., column sum) vector of $A$, and similarly for $B$.
 
 We now establish the following interchange result, the first conclusion of which shows that by a sequence of interchanges every matrix in ${\mathcal S}_{m,n}(R,S)$ can be brought to a matrix in ${\mathcal S}_{m,n}^+(R,S)$.
 The result is related to  the construction in \cite{JA} of the ``key" of an ASM.

 \begin{theorem}
 \label{th:interchange}
  Let $A, B \in \mc{S}_{m,n}$  with row sum vectors   $R=R(A)=R(B)$ and column sum vectors $S=S(A)=S(B)$. 
  \begin{itemize}
  \item[\rm (i)] There exist SRMs $A^{(i)}$ $(1\le i \le k)$ such that $A^{(1)}=A$ and  $A^{(k)}\in \mc{A}(R,S)$, and $A^{(i+1)}$ is obtained from $A^{(i)}$ by an interchange $(1\le i < k)$.   
  \item[\rm (ii)]
  There exist SRMs $A^{(i)}$ $(1\le i \le p)$ such that $A^{(1)}=A$, $A^{(p)}=B$, and $A^{(i+1)}$ is obtained from $A^{(i)}$ by an interchange $(1\le i < p)$. 
  \end{itemize}
 \end{theorem}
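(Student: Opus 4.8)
The plan is to split the statement into two pieces. The essential content is part~(i); part~(ii) then follows formally. Recall from the paragraph preceding the theorem that reaching $\mc{A}(R,S)$ means reaching a $(0,1)$-matrix of $\mc{S}^{+}_{m,n}(R,S)$, which is nonempty by Proposition~\ref{pr:row-sums1}. Granting (i), I would prove (ii) as follows: apply (i) to $A$ and to $B$ to obtain $A^{*},B^{*}\in\mc{S}^{+}_{m,n}(R,S)$, each reachable from $A$ (resp.\ $B$) by SRM-interchanges; separately connect $A^{*}$ to $B^{*}$ by interchanges staying inside $\mc{S}^{+}_{m,n}$; and then concatenate the sequence $A\to\cdots\to A^{*}$, the connecting sequence $A^{*}\to\cdots\to B^{*}$, and the reverse of $B\to\cdots\to B^{*}$. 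Reversal is legitimate because adding and subtracting $E$ are inverse operations and the reversed intermediate matrices are the very same SRMs.

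For the $(0,1)$-connectivity used here I would first note that a $(0,1)$-matrix lies in $\mc{S}^{+}_{m,n}$ exactly when every column has at most one $1$ (the partial-column-sum condition forces this, while nonnegativity of partial row sums is automatic for nonnegative entries). Thus a member of $\mc{S}^{+}_{m,n}(R,S)$ is the same data as a choice, for each column $j$ with $s_{j}=1$, of a row containing its unique $1$, with row $i$ chosen by exactly $r_{i}$ columns. Two such choices with the same fibre sizes differ by a permutation, hence by a product of transpositions; each transposition swaps the rows of two columns $j,l$ and is realized by the classical Ryser swap $\left[\begin{smallmatrix}1&0\\0&1\end{smallmatrix}\right]\leftrightarrow\left[\begin{smallmatrix}0&1\\1&0\end{smallmatrix}\right]$, i.e.\ by adding or subtracting $E$ on the $2\times2$ submatrix in those two columns and the two relevant rows. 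Every intermediate matrix still has one $1$ per used column and the correct margins, so it is again in $\mc{S}^{+}_{m,n}(R,S)$, and the whole path stays in the SRM class.

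The real work is part~(i), which I would prove by descent on the number of $-1$'s. Among all SRMs reachable from $A$ by interchanges through $\mc{S}_{m,n}$, choose one, again called $A$, with the fewest $-1$'s, and aim to show this number is $0$. If not, pick a $-1$ at a position $(i,j)$ and cancel it by a single interchange that creates no new $-1$. The natural cancelling move is ``push a row-pair up'': since the partial row sum of row $i$ through column $j-1$ is at least $1$, there is a $+1$ at some $(i,j')$ with $j'<j$; and by the alternation in column $j$ the nearest nonzero above $(i,j)$ is a $+1$, say at $(h,j)$ with $h<i$, so $a_{hj}=1$. If moreover $a_{hj'}\le 0$, then adding $E$ on rows $\{h,i\}$ and columns $\{j',j\}$ sends $a_{ij}=-1\mapsto 0$, $a_{ij'}=1\mapsto 0$, $a_{hj}=1\mapsto 0$, and $a_{hj'}\mapsto a_{hj'}+1\le 1$, removing the chosen $-1$ (and possibly a second one) while introducing none; the two affected columns merely have their alternating runs shifted. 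This strictly lowers the $-1$-count, contradicting minimality, so $A\in\mc{S}^{+}_{m,n}(R,S)$.

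The main obstacle is exactly the existence and validity of this cancelling interchange: one must choose the $-1$, the companion $+1$ at $(i,j')$, and the row $h$ so that simultaneously $a_{hj'}\le 0$, the two columns $j',j$ stay alternating with all partial column sums in $\{0,1\}$, and all partial row sums stay nonnegative. The danger is that every candidate $j'$ has $a_{hj'}=1$ (so the move would produce a forbidden $2$) or that the shift breaks alternation or makes some partial row sum negative. I expect to control this by an extremal choice of the $-1$ — for instance the lowest $-1$ in the rightmost column containing one — together with a matching argument that pairs each $-1$ with the $+1$ it cancels along its row and along its column; the alternation in columns and the partial-sum bounds should then force the existence of a companion column $j'$ with $a_{hj'}\le 0$, and should certify that the partial sums in rows $h,i$ and in columns $j',j$ remain admissible after the move. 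Verifying this existence and preservation uniformly over all cases is the crux of the argument.
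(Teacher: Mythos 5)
Your reduction of (ii) to (i), and your argument that any two matrices in $\mc{S}^+_{m,n}(R,S)$ are connected by interchanges staying inside $\mc{S}^+_{m,n}(R,S)$, are both correct and essentially what the paper does (the paper cites Ryser's interchange theorem but notes, as you do, that the one-$1$-per-column structure makes this step elementary). The genuine gap is exactly where you locate it: in (i) you describe the right cancelling move but never establish that a valid instance of it exists, and that existence is the entire content of the theorem. The paper's key idea, absent from your proposal, is to cancel a $-1$ at a position $(i,j)$ with $i+j$ \emph{minimal} among all $-1$'s. Minimality then does all the work: row $i$ has no $-1$ to the left of column $j$ and column $j$ has no $-1$ above row $i$, so there are $k<i$ with $a_{kj}=1$ and $l<j$ with $a_{il}=1$; the fourth corner satisfies $a_{kl}=0$, since $a_{kl}=-1$ would contradict minimality ($k+l<i+j$), while $a_{kl}=1$ would, by column alternation, force a $-1$ in column $l$ strictly between rows $k$ and $i$, again contradicting minimality; in fact column $l$ is entirely zero above row $i$. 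With these facts every partial row and column sum condition is easily checked after adding $E$ on rows $k,i$ and columns $l,j$, so the interchange stays in $\mc{S}_{m,n}$ and strictly reduces the number of $-1$'s.

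By contrast, the extremal choice you float --- the lowest $-1$ in the rightmost column containing one --- already fails on the first matrix of Example \ref{ex:onea}: in $\left[\begin{smallmatrix}0&1&1\\1&-1&0\\0&1&-1\end{smallmatrix}\right]$ that $-1$ sits at $(3,3)$, the nearest $1$ above it is at $(1,3)$, the only $1$ to its left in row $3$ is at $(3,2)$, and $a_{12}=1$, so your move would create an entry equal to $2$. (The top-left choice cancels the $-1$ at $(2,2)$ first, after which the one at $(3,3)$ becomes cancellable.) So your descent framework and your part (ii) are sound, but the crux you flag as ``to be verified'' is precisely the theorem, and its resolution needs the opposite extremal choice from the one you suggest.
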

 \begin{proof}
  Assume first that $A=[a_{ij}]$ has at least one $-1$. Choose a position $(i,j)$ with $a_{ij}=-1$ and $i+j$ minimal with this property; we then call $(i,j)$ a {\em top-left} position of a $-1$. (Such a position may not be unique, but this has no importance.) Since $A$ is an SRM there must exist a $k<i$ such that $a_{kj}=1$ and there exists $l<j$ such that $a_{il}=1$ (as the first nonzero in a row or column cannot be a $-1$). Now, we must have $a_{kl}=0$. In fact, $a_{kl}$ cannot be $-1$ as $(i,j)$ is a  top-left position of a $-1$. Moreover, $a_{kl}$ cannot be $1$, because then column $l$ would have to contain a $-1$ in some position $(i',l)$ with $k<i'<i$, again contradicting that $(i,j)$ a top-left position of a $-1$. 
  
  Let the matrix $A'$ be obtained from $A$ by adding the $2 \times 2$ matrix $E$ (see (\ref{eq:E})) to the submatrix of $A$ corresponding to rows $k, i$ and columns $l,j$. Then, from the properties just mentioned, it follows that $A'$ is  an SRM. Moreover, the number of $-1$'s in $A'$ is one less than the number in $A$. We can therefore repeat this procedure of interchanges and find a sequence of SRMs, each obtained by an interchange applied to the previous one, such that the final matrix $A^*$ does not have any entries equal to $-1$, i.e., it is a $(0,1)$-matrix. As mentioned, interchanges do not change any row or column sums, so $R(A^*)=R(A)$ and $S(A^*)=S(A)$. This proves (i).
  
Similarly, we may find interchanges and intermediary SRMs connecting $B$ to a   $(0,1)$-matrix $B^*$. Then $R(B^*)=R(B)=R(A)=R(A^*)$ and $S(B^*)=S(B)=S(A)=S(A^*)$, so $A^*$ and $B^*$ are both contained in ${\mc A}(R,S)$, and by the Ryser interchange theorem (see e.g.,  \cite{BR91}), one can use interchanges to go from $A^*$ to $B^*$ such that intermediary matrices are $(0,1)$-matrices. In fact, the last interchange result is easy to show directly, because $A^*$ and $B^*$ are $(0,1)$-SRMs, so each column contains at most one 1. The theorem now follows. 
 \end{proof}
 
 The algorithm given in the previous proof is illustrated in the next example.

  \begin{example}
 \label{ex:one}
 {\rm Consider the SRM $A$ below, and the transformation to an SRM which is a $(0,1)$-matrix:
 \[
 A=
 \left[\begin{array}{rrr} 0&1&1\\ 1&-1&0\\0&1&-1\\ 0&0&1\end{array}\right] 
 \rightarrow 
  \left[\begin{array}{rrr} \mbox{ }1&\mbox{ }0&1\\ 0&0&0\\0&1&-1\\ 0&0&1\end{array}\right] 
 \rightarrow 
 \left[\begin{array}{rrr} \mbox{ }1&\mbox{ }1&\mbox{ }0\\ 0&0&0\\0&0&0\\ 0&0&1\end{array}\right].
 \]
 Here we first added the submatrix $E$ to the submatrix given by the first two rows and columns, and then we added $E$ to the submatrix given by rows $1,3$ and columns $2,3$ to get the final matrix.
 } \endproof
\end{example} 

From the proof of Theorem \ref{th:interchange} it follows that every  matrix class $\mc{S}(R,S)$, consisting of SRMs with row sum vector $R$ and column sum vector $S$, contains a unique $(0,1)$-matrix $A=\bar{A}(R,S)$ with the following structure: ignoring zero columns (where $s_j=0$) the first row has ones in the first $r_1$ columns, the second has ones in the next $r_2$ columns etc. The example above shows the canonical matrix when $R=(2,0,0,1)$ and $S=(1,1,1)$. 

We now turn to interchange properties of general $(0, \pm 1)$-matrices with  a prescribed row sum vector $R$ and column sum vectors $S$. Here again an interchange is adding or subtracting the matrix $E$ in (\ref{eq:E}) to some $2 \times 2$  submatrix in such a way that one obtains a new  $(0,\pm 1)$-matrix, necessarily with the same row sum vector $R$ and column sum vector $S$.
 
 Let $R=(r_1,r_2,\ldots, r_m)$ and $S=(s_1,s_2,\ldots,s_n)$ be nonnegative integral vectors with $r_1+r_2+\cdots+r_m=s_1+s_2+\cdots+s_n$. Let ${\mathcal A}(R,S)$ denote the class of $(0,1)$-matrices with specified row sum vector $R$ and specified column sum vector $S$. Also, let ${\mathcal A}^{\pm}(R,S)$ be the set of all $(0,\pm 1)$-matrices with row sum vector $R$ and column sum vector $S$. Let $J$ be the $m\times n$ matrix of all 1's. Then the mapping $A\rightarrow A+J$ is a bijection between ${\mathcal A}^{\pm}(R,S)$ and the set ${\mathcal A}^{0,1,2}(R',S')$ of all $m\times n$ $(0,1,2)$-matrices with row sum vector $R'=R+e_{(m)}$ and column sum vector $S'=S+e_{(n)}$ where $e_{(m)}$ (resp. $e_{(n)}$) is the all 1's vector of size $m$ (resp. size $n$).
 The special case of  Theorem 6.2.4 in \cite{BR91} obtained by taking $p=2$ gives a necessary and sufficient condition for the nonemptiness of ${\mathcal A}^{0,1,2}(R',S')$ and thus of ${\mathcal A}^{\pm}(R,S)$.
 Without loss of generality, $S$ can be assumed to be non-increasing.
 
 \begin{lemma}{\rm (\cite{BR91})}
 \label{lem:nonempty}
 Let $R=(r_1,r_2,\ldots, r_m)$ and $S=(s_1,s_2,\ldots,s_n)$ be nonnegative integral vectors with $r_1+r_2+\cdots+r_m=s_1+s_2+\cdots+s_n$. Assume that $S$ is nonincreasing. Then  ${\mathcal A}^{0,1,2}(R,S)\ne \emptyset$ if and only if
 \[\sum_{j=1}^k s_j\le\sum_{i=1}^m \min\{r_i,2k\},\ (k=1,2,\ldots, n).\]
 \end{lemma}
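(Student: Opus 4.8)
The plan is to recognize Lemma~\ref{lem:nonempty} as the $p=2$ case of the Gale--Ryser-type theorem for $(0,1,\ldots,p)$-matrices (Theorem 6.2.4 in \cite{BR91}), so the proof reduces to unwinding that general statement. The strategy is to prove both directions via a majorization/flow argument on the ``truncated'' row sums.

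First I would establish necessity, which is the easy half. Suppose $A=[a_{ij}]\in {\mathcal A}^{0,1,2}(R,S)$ with $S$ nonincreasing. Fix $k\in\{1,2,\ldots,n\}$ and consider the leading $m\times k$ submatrix consisting of the first $k$ columns. Counting its entries by columns gives $\sum_{j=1}^k s_j$. Counting by rows, row $i$ contributes at most $\min\{r_i,2k\}$, since each of the $k$ entries in that row is at most $2$ (so the row contributes at most $2k$), while the total over all $n$ columns of that row is exactly $r_i$ (so the row contributes at most $r_i$). Summing the per-row bounds yields $\sum_{j=1}^k s_j\le\sum_{i=1}^m\min\{r_i,2k\}$, which is the stated inequality.

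For sufficiency, the cleanest route is a network-flow / defect-version-of-Hall argument, or equivalently a direct greedy construction. I would model the matrix as a bipartite transportation problem: a source feeds each row-node $i$ with capacity $r_i$, each row-node connects to each column-node $j$ with an edge of capacity $2$ (to enforce entries in $\{0,1,2\}$), and each column-node $j$ drains to a sink with capacity (demand) $s_j$. A feasible integral flow saturating all demands corresponds exactly to a matrix in ${\mathcal A}^{0,1,2}(R,S)$, because $\sum_i r_i=\sum_j s_j$ forces every source edge to be saturated as well. By the max-flow/min-cut theorem (with integrality, since all capacities are integers), such a flow exists if and only if every cut separating source from sink has capacity at least $\sum_j s_j$; analyzing cuts and using that $S$ is nonincreasing reduces the family of cut inequalities to exactly the $n$ inequalities $\sum_{j=1}^k s_j\le\sum_{i=1}^m\min\{r_i,2k\}$ for $k=1,\ldots,n$. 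The nonincreasing assumption on $S$ is what lets one restrict attention to the ``initial segment'' column sets $\{1,\ldots,k\}$ rather than all $2^n$ subsets.

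The main obstacle is the sufficiency direction, and specifically the reduction of the exponentially many cut (or Hall-type) conditions down to the $n$ prefix inequalities. The point requiring care is that in a min-cut one may include an arbitrary subset $T$ of column-nodes on the sink side; one must argue that, because every edge into a column has capacity $2$ and $S$ is sorted, the worst (tightest) constraint for each cardinality $|T|=k$ is achieved by taking $T=\{1,2,\ldots,k\}$, the $k$ columns with largest demands. Since this is precisely the specialization $p=2$ of Theorem 6.2.4 in \cite{BR91}, I would simply cite that theorem for the sufficiency and present the self-contained necessity computation above, noting that the substitution $p=2$ turns the general bound $\min\{r_i,pk\}$ into $\min\{r_i,2k\}$.
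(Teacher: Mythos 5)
Your proposal is correct and matches the paper's treatment: the paper gives no proof of this lemma at all, simply citing it as the $p=2$ specialization of Theorem 6.2.4 in \cite{BR91}, which is exactly what you do for the sufficiency direction. Your self-contained necessity computation (counting the leading $m\times k$ submatrix two ways) and your max-flow sketch of sufficiency are both sound and consistent with the standard proof of the cited theorem.
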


By the bijection of the previous paragraph, we
have the following as a corollary of Lemma \ref{lem:nonempty}.

\begin{corollary}
 ${\mathcal A}^{\pm}(R,S)\ne \emptyset$ if and only if
 \[\sum_{j=1}^ks_j\le \sum_{i=1}^m \min\{r_i+n,2k\}-km\ (k=1,2,\ldots, n).\]
\end{corollary}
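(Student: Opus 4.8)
The plan is to reduce the claim directly to Lemma \ref{lem:nonempty} through the bijection described just above the corollary. First I would record that the map $A \mapsto A+J$ carries every $(0,\pm 1)$-matrix to a $(0,1,2)$-matrix and is invertible (its inverse is $B \mapsto B-J$), so it restricts to a bijection between $\mathcal{A}^{\pm}(R,S)$ and $\mathcal{A}^{0,1,2}(R',S')$. In particular, one of these classes is nonempty exactly when the other is. Since each row of $J$ sums to $n$ and each column of $J$ sums to $m$, adding $J$ raises every row sum by $n$ and every column sum by $m$, so the transformed sum vectors are $R'=(r_1+n,\ldots,r_m+n)$ and $S'=(s_1+m,\ldots,s_n+m)$.

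Next I would check that the hypotheses of Lemma \ref{lem:nonempty} hold for the pair $(R',S')$. Carrying over the standing assumption (stated just before the lemma) that $S$ is nonincreasing, the vector $S'$ obtained by adding the constant $m$ to each entry is again nonincreasing, so the lemma applies. It asserts that $\mathcal{A}^{0,1,2}(R',S') \neq \emptyset$ if and only if
\[
\sum_{j=1}^k s'_j \le \sum_{i=1}^m \min\{r'_i,\, 2k\} \qquad (k=1,2,\ldots,n).
\]

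Finally I would substitute $s'_j = s_j + m$ and $r'_i = r_i + n$ and simplify. The left-hand side becomes $\sum_{j=1}^k s_j + km$, while the right-hand side is exactly $\sum_{i=1}^m \min\{r_i+n,\,2k\}$; transposing the term $km$ yields precisely the stated inequality. Combining this equivalence with the nonemptiness correspondence furnished by the bijection completes the argument. There is no real obstacle here, as the proof is a direct translation through the bijection; the only points meriting care are the bookkeeping of the two constants ($n$ added to the row sums, $m$ added to the column sums) and the observation that adding a constant to every entry of $S$ preserves its monotonicity, which is what licenses the use of Lemma \ref{lem:nonempty}.
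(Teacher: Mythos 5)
Your proof is correct and follows exactly the route the paper intends: the paper simply states that the corollary follows ``by the bijection'' $A\mapsto A+J$ from Lemma \ref{lem:nonempty}, and your write-up fills in precisely that translation, including the substitution $R'=R+ne_{(m)}$, $S'=S+me_{(n)}$ and the check that monotonicity of $S$ is preserved. No further comment is needed.
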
 
 
\begin{example}
{\rm 
Let $m=n=2$, and $R=S=(2,0)$. Then $\mc{A}^{\pm}(R,S)$ is nonempty and contains the matrix
\[
A=\left[
\begin{array}{rr} 
1& 1\\ 
1&-1
\end{array}
\right].
\]
The condition in the theorem becomes: 
\[
\begin{array}{ll}
   k=1: & 2=s_1 \le \min\{2,4\}+\min\{2,3\}-1\cdot 2=2 \\
   k=2: & 2+0=s_1+s_2 \le \min\{4,4\}+\min\{4,2\}-2\cdot 2=2.
\end{array}   
\]
Note that  in this example $\mc{A}(R,S)$ is empty.\endproof
}
\end{example}

 In \cite{Anstee83} (see also Theorem 4.4.6 and the paragraph following its proof in \cite {RAB}), the following result is established.
 
 \begin{lemma}\label{lem:anstee} Any two matrices in ${\mathcal A}^{0,1,2}(R',S')$ can be obtained from one another by a sequence of interchanges with all intermediary matrices also in ${\mathcal A}^{0,1,2}(R',S')$.
 \end{lemma}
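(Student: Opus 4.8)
The plan is to prove the lemma by induction on the distance $\delta(A,B)=\sum_{i,j}|a_{ij}-b_{ij}|$ between two matrices $A=[a_{ij}]$ and $B=[b_{ij}]$ in ${\mathcal A}^{0,1,2}(R',S')$, following the alternating-cycle method used by Ryser for $(0,1)$-matrices and refined by Anstee for bounded entries. If $\delta(A,B)=0$ then $A=B$ and there is nothing to prove. So suppose $A\neq B$ and consider $D=A-B=[d_{ij}]$. Since $A$ and $B$ share the same row and column sums, every row sum and every column sum of $D$ is $0$; as $D\neq 0$, each nonzero line of $D$ contains both a positive and a negative entry. The goal at each step is to exhibit a single interchange applicable to $A$ that keeps $A$ in ${\mathcal A}^{0,1,2}(R',S')$ and strictly decreases $\delta(A,B)$; induction then finishes the proof, and since interchanges are reversible the resulting sequence connects $A$ and $B$ in both directions with all intermediary matrices in the class.

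First I would extract an alternating cycle from the support of $D$. Starting from a position $(i_1,j_1)$ with $d_{i_1j_1}>0$, I use the vanishing line sums to walk alternately down columns and along rows, choosing entries of alternating sign: a negative entry $(i_2,j_1)$ in column $j_1$, a positive entry $(i_2,j_2)$ in row $i_2$, and so on. Because the matrix is finite this walk must revisit a row or column, yielding a closed cycle $(i_1,j_1),(i_2,j_1),(i_2,j_2),\ldots$ whose entries alternate in sign around the cycle. The crucial elementary observation is that the sign of $d_{ij}$ controls the saturation of $a_{ij}$: if $d_{ij}>0$ then $a_{ij}>b_{ij}\ge 0$, so $a_{ij}\ge 1$; and if $d_{ij}<0$ then $a_{ij}<b_{ij}\le 2$, so $a_{ij}\le 1$. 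Hence every $A$-entry lying on the cycle can be moved one step toward its $B$-value without leaving $\{0,1,2\}$.

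If the cycle has length $4$, the four positions form a $2\times 2$ submatrix of $D$ with sign pattern $\left[\begin{smallmatrix}+&-\\-&+\end{smallmatrix}\right]$ (up to transposition or negation), and by the observation above, subtracting (or adding) $E$ to the corresponding submatrix of $A$ keeps $A$ in the class and reduces $\delta(A,B)$ by $4$. If the cycle is longer, I would shorten it: take three consecutive cycle positions spanning rows $i_1,i_2$ and columns $j_1,j_2$ and apply $E$ to this $2\times 2$ block. The three cycle corners each move toward $B$ (decreasing $\delta$ by $3$), while the fourth corner, the \emph{chord} $(i_1,j_2)$ lying off the cycle, changes by one unit (increasing $\delta$ by at most $1$); so $\delta$ drops by at least $2$ and the cycle is replaced by one shorter by two edges.

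The step I expect to be the main obstacle is precisely the admissibility of this shortening interchange at the chord: for the move to stay in $\{0,1,2\}$ one needs $a_{i_1j_2}$ to be unsaturated in the required direction, and this can fail exactly when $A$ and $B$ already agree at the chord with a value $0$ or $2$ that blocks the change. Handling this is the heart of Anstee's refinement over the $(0,1)$-case: when the chosen chord is blocked one must instead reroute the cycle through that saturated position, reverse the orientation of the interchange, or select a different pair of consecutive cycle edges, and then argue from the line-sum constraints that some admissible, $\delta$-nonincreasing interchange always exists which strictly shortens the cycle. Once a length-$4$ cycle is reached the strict decrease in $\delta$ is guaranteed, closing the induction; this is the argument of \cite{Anstee83} and Theorem~4.4.6 of \cite{RAB}.
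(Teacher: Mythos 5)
You should first be aware that the paper does not prove this lemma at all: it is stated as a quoted result, with the one-sentence justification ``In \cite{Anstee83} (see also Theorem 4.4.6 and the paragraph following its proof in \cite{RAB}), the following result is established.'' So there is no in-paper argument to match yours against; the relevant comparison is between your sketch and the actual content of Anstee's theorem.

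Your proposal correctly sets up the standard alternating-cycle framework (zero line sums of $D=A-B$, the sign-saturation observation that $d_{ij}>0$ forces $a_{ij}\ge 1$ and $d_{ij}<0$ forces $a_{ij}\le 1$, and the clean length-$4$ case), and it correctly locates where the difficulty lies. But it does not close that difficulty, and that is a genuine gap rather than a routine omission. In the shortening step, the chord entry $a_{i_1j_2}$ can be blocked: if the orientation of $E$ dictated by the three cycle corners requires subtracting $1$ at a chord where $a_{i_1j_2}=0$ (or adding $1$ where $a_{i_1j_2}=2$), the interchange leaves $\{0,1,2\}$, and if moreover $d_{i_1j_2}=0$ you cannot reroute the cycle of $D$ through that chord either, since the chord is not in the support of $D$. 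Your proposed remedies --- ``reroute the cycle, reverse the orientation, or select a different pair of consecutive cycle edges'' --- are exactly the moves one would try, but you give no argument that one of them always succeeds; for entries bounded above by $2$ this is precisely the point where the naive Ryser-style induction breaks and where Anstee's proof switches to a network-flow formulation instead of direct cycle surgery. As written, the heart of the lemma is asserted, not proved. Either carry out the case analysis at the blocked chord in full (showing from the zero line sums of $D$ that an admissible, $\delta$-nonincreasing interchange always exists), or present the lemma as the paper does, as a citation of \cite{Anstee83} and Theorem 4.4.6 of \cite{RAB}, rather than as a proof.
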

 
 As an immediate corollary we obtain the following.
 
 \begin{corollary}\label{cor:ansteex}
 Any two matrices in ${\mathcal A}^{\pm } (R,S)$ can be obtained from one another by a sequence of interchanges with all intermediary matrices also in ${\mathcal A}^{\pm }(R,S)$.
 \end{corollary}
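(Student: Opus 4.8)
The plan is to transport the statement through the bijection $\phi\colon A\mapsto A+J$ introduced just before Lemma~\ref{lem:nonempty}, which maps ${\mathcal A}^{\pm}(R,S)$ onto ${\mathcal A}^{0,1,2}(R',S')$ with $R'=R+e_{(m)}$ and $S'=S+e_{(n)}$. The essential point is that an interchange commutes with translation by the constant matrix $J$: if $B$ is obtained from $A$ by adding or subtracting $E$ (see (\ref{eq:E})) in the $2\times 2$ submatrix on rows $\{i,i'\}$ and columns $\{j,j'\}$, then $B+J$ is obtained from $A+J$ by exactly the same operation on the same submatrix, since the difference $B-A$ equals $(B+J)-(A+J)$. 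Hence $A$ and $B$ differ by an interchange precisely when $\phi(A)$ and $\phi(B)$ do.

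Next I would check that $\phi$ identifies not only the two ambient classes but also the admissibility of each intermediate step. Because $\phi$ is a bijection from ${\mathcal A}^{\pm}(R,S)$ onto ${\mathcal A}^{0,1,2}(R',S')$, a matrix $C$ lies in ${\mathcal A}^{\pm}(R,S)$ if and only if $C+J$ lies in ${\mathcal A}^{0,1,2}(R',S')$. Consequently an interchange applied to a member of ${\mathcal A}^{\pm}(R,S)$ produces another member of ${\mathcal A}^{\pm}(R,S)$ exactly when the corresponding interchange applied to its $\phi$-image stays inside ${\mathcal A}^{0,1,2}(R',S')$. With both observations in hand the argument is immediate: given $A,B\in{\mathcal A}^{\pm}(R,S)$, their images lie in ${\mathcal A}^{0,1,2}(R',S')$, so by Lemma~\ref{lem:anstee} there is a sequence of interchanges taking $\phi(A)$ to $\phi(B)$ with every intermediate matrix again in ${\mathcal A}^{0,1,2}(R',S')$. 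Subtracting $J$ from each matrix in this sequence yields a sequence from $A$ to $B$; by the first observation each step is still an interchange, and by the second each intermediate matrix lies in ${\mathcal A}^{\pm}(R,S)$, which is exactly the claim.

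As for the main obstacle, there is essentially none beyond recording the compatibility of the interchange operation with the shift $A\mapsto A+J$. The only subtlety worth a sentence is that the word \emph{interchange} carries an implicit admissibility constraint (the result must remain in the relevant ambient class), and one must verify that $\phi$ matches these constraints on the two sides; once that is noted, the corollary is a purely formal translation of Lemma~\ref{lem:anstee} and genuinely ``immediate.''
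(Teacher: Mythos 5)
Your proof is correct and is exactly the argument the paper intends: it invokes the bijection $A\mapsto A+J$ between ${\mathcal A}^{\pm}(R,S)$ and ${\mathcal A}^{0,1,2}(R',S')$ set up just before Lemma~\ref{lem:nonempty}, observes that interchanges commute with this shift, and transports Lemma~\ref{lem:anstee}. The paper leaves this as ``immediate,'' and your write-up simply makes the same translation explicit.
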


Note that in order to get a matrix with a $-1$ from a matrix $A$ in ${\mathcal A}(R,S)$, $A$ needs to have a $2\times 2$ submatrix with at most one 1.
 For instance,
 \[\left[\begin{array}{ccc}
 1&0&0\\ 0&1&0\\ 1&0&1\end{array}\right]+
 \left[\begin{array}{rrr}
 0&1&-1\\ 
 0&-1&1\\ 
 0&0&0\end{array}\right]=
 \left[\begin{array}{rrr}
 1&1&-1\\ 0&0&1\\ 1&0&1\end{array}\right].\]

 It follows from Corollary \ref{cor:ansteex} that if a nonempty class ${\mathcal A}^{\pm } (R,S)$ contains a $(0,1)$-matrix $A$, that is, a matrix in ${\mathcal A}(R,S)$, then $A$ can be obtained from any matrix in ${\mathcal A}^{\pm } (R,S)$ by a sequence of interchanges where all intermediary matrices are in  ${\mathcal A}^{\pm } (R,S)$.
 In particular, this is the case when  $m=n$ and $R=S=(1,1,\ldots,1)$, for then ${\mathcal A}^{\pm } (R,S)$ includes all permutation matrices. The next example illustrates Corollary \ref{cor:ansteex}.

   \begin{example}{\rm 
Consider $I_5$ and the following $(0,\pm 1)$-matrix with row and column sums equal to 1:
\[\left[\begin{array}{r|r|r|r|r}
1&&-1&1&\\ \hline
-1&1&1&& \\ \hline
-1&1&1&-1&1 \\ \hline   
1&&-1&1& \\ \hline   
1&-1&1&& \end{array}\right].\]
Then by interchanges we get
\[ \left[\begin{array}{r|r|r|r|r}
1&&-1&1&\\ \hline
-1&1&1&& \\ \hline
-1&1&1&-1&1 \\ \hline   
1&&-1&1& \\ \hline   
1&-1&1&& \end{array}\right]\rightarrow \left[\begin{array}{r|r|r|r|r}
1&&&&\\ \hline
-1&1&&1& \\ \hline
-1&1&1&-1&1 \\ \hline   
1&&-1&1& \\ \hline   
1&-1&1&& \end{array}\right]\rightarrow
\left[\begin{array}{r|r|r|r|r}
1&&&&\\ \hline
-1&1&&1& \\ \hline
-1&1&1&& \\ \hline   
1&&-1&&1 \\ \hline   
1&-1&1&& \end{array}\right]\rightarrow\]
\[\left[\begin{array}{r|r|r|r|r}
1&&&&\\ \hline
-1&1&&1& \\ \hline
&1&&& \\ \hline   
&&&&1 \\ \hline   
1&-1&1&& \end{array}\right]\rightarrow
\left[\begin{array}{r|r|r|r|r}
1&&&&\\ \hline
&&&1& \\ \hline
&1&&& \\ \hline   
&&&&1 \\ \hline   
&&1&& \end{array}\right].
\]
}\hfill{$\Box$}
\end{example}

\medskip
The next theorem characterizes when $\mc{A}(R,S)=\mc{A}^{\pm}(R,S)$, and shows (when the class is nonempty) the structure of a certain matrix in that class.

\begin{theorem}
 \label{thm:equal-ARS-pm} Let $R$ and $S$ be such that $\mc{A}(R,S)\ne\emptyset$. Then
 $\mc{A}(R,S)=\mc{A}^{\pm}(R,S)$ if and only if there are integers $k, l, m_i, n_i \ge 0$ $(i \le 3)$ with $p_i \ge 2$ $(i \le k+l)$ such that  $m=m_1+m_2+m_3$, $n=n_1+n_2+n_3$, 
\[
   k+\sum_{i=k+1}^{k+l} p_i = m_3, \;\; \mbox{\rm and} \;\; l+\sum_{i=1}^{k} p_i = n_3. 
\]     
and $R$ is a permutation of 
 \[
  (\underbrace{n,  \ldots, n}_{m_1}, \underbrace{n-1,  \ldots, n-1}_{m_2+m_3-k}, \underbrace{n-p_1,  \ldots, n-p_k}_{k})
\] 
and $S$ is a permutation of 
 \[
  (\underbrace{m, \ldots, m}_{n_1}, \underbrace{m-1,  \ldots, m-1}_{n_2+n_3-l}, \underbrace{m-p_{k+1},  \ldots, m-p_{k+l}}_{l}).
\] 
\end{theorem}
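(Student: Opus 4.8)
The plan is to first reduce the equality of the two classes to a purely combinatorial statement about the $(0,1)$-matrices in $\mc{A}(R,S)$, and then to decide which margins force that statement. For the reduction I would use the remark preceding the theorem, that an interchange applied to a $(0,1)$-matrix can create a $-1$ exactly when the chosen $2\times 2$ submatrix contains at most one $1$, together with Corollary \ref{cor:ansteex}. Since $\mc{A}(R,S)\ne\emptyset$ and $\mc{A}(R,S)\subseteq\mc{A}^{\pm}(R,S)$ always, we have $\mc{A}(R,S)=\mc{A}^{\pm}(R,S)$ iff $\mc{A}^{\pm}(R,S)$ contains no matrix with a $-1$. If some $M\in\mc{A}^{\pm}(R,S)$ had a $-1$, then joining a $(0,1)$-matrix of $\mc{A}(R,S)$ to $M$ by interchanges inside $\mc{A}^{\pm}(R,S)$, the first matrix acquiring a $-1$ comes from a $(0,1)$-matrix of $\mc{A}(R,S)$, and that matrix must have a $2\times 2$ submatrix with at most one $1$; conversely such a submatrix produces a $-1$ in one interchange. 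Hence the theorem is equivalent to: \emph{every} $A\in\mc{A}(R,S)$ has every $2\times 2$ submatrix containing at least two $1$'s.

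It is cleaner to pass to complements. With $\bar A=J-A$, $\bar R=(n-r_1,\ldots,n-r_m)$ and $\bar S=(m-s_1,\ldots,m-s_n)$, the condition reads: no $2\times 2$ submatrix of any $\bar A\in\mc{A}(\bar R,\bar S)$ has three or more $1$'s. I would first analyze a single \emph{good} matrix $\bar A$: looking at a pair of rows, the requirement forces their supports to be disjoint or equal singletons. This yields the block structure behind the statement — each row (and, by symmetry, each column) has $0$, exactly $1$, or at least $2$ ones in $\bar A$; the ``heavy'' rows have pairwise disjoint supports consisting of columns that themselves carry a single $1$, and the $1$-on-$1$ incidences form a partial permutation. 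Reading off row and column sums of such a matrix gives the two permuted vectors of the statement together with the relations $k+\sum_{i=k+1}^{k+l}p_i=m_3$ and $l+\sum_{i=1}^{k}p_i=n_3$.

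The heart of the argument is to decide when \emph{every} member of $\mc{A}(\bar R,\bar S)$ is good, i.e.\ when the good structure is forced by the margins rather than merely attainable. The key claim I would establish is that this happens if and only if $\bar R$ or $\bar S$ is a $(0,1)$-vector, equivalently $r_i\ge n-1$ for all $i$ or $s_j\ge m-1$ for all $j$. Sufficiency is immediate: if every row of $\bar A$ has at most one $1$, then every $2\times 2$ submatrix has at most two $1$'s, and symmetrically for columns. For necessity I would show that if $\bar R$ has an entry $\ge 2$ (a heavy row $i_0$) and $\bar S$ has an entry $\ge 2$ (a heavy column $j_0$) then a bad matrix exists: starting from any good $\bar A$, the heavy row $i_0$ has a $1$ in a private light column $a$ and the heavy column $j_0$ has a $1$ in a private light row $b$, so the submatrix on rows $i_0,b$ and columns $j_0,a$ equals $\left[\begin{smallmatrix}0&1\\1&0\end{smallmatrix}\right]$; a single interchange turns it into $\left[\begin{smallmatrix}1&0\\0&1\end{smallmatrix}\right]$, placing a $1$ at the heavy–heavy cell $(i_0,j_0)$ while preserving margins, and the remaining ones of row $i_0$ and of column $j_0$ then form a $2\times 2$ submatrix with three $1$'s. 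I expect this necessity construction to be the main obstacle, since one must verify that $i_0,b$ and $j_0,a$ are distinct (which follows from ``heavy'' versus ``light''), that the interchange keeps the matrix $(0,1)$-valued, and that it genuinely destroys goodness.

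Finally, I would translate the criterion into the parametrized form. The condition ``$\bar R$ or $\bar S$ is a $(0,1)$-vector'' says the heavy deficiencies lie entirely on one side, i.e.\ $k=0$ or $l=0$: in the first case $R$ has entries only $n$ and $n-1$, and $S$ decomposes into $n_1$ full columns, singly-deficient columns, and $l$ heavy columns of deficiencies $p_{k+1},\ldots,p_{k+l}\ge 2$, while the second case is the transpose statement. In either case a direct count recovers $m=m_1+m_2+m_3$, $n=n_1+n_2+n_3$ and the two displayed relations, and conversely any admissible choice of the parameters confines the heavy part to a single side and so yields margins whose class is entirely good. This last matching step is routine bookkeeping once the characterization of the preceding paragraph is in hand; the only subtle point worth stressing is that Type C rows and Type C columns cannot coexist, which is precisely the content of the necessity direction.
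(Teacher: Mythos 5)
Your reduction of the problem, via Corollary \ref{cor:ansteex}, to the statement ``every $2\times 2$ submatrix of every matrix in $\mc{A}(R,S)$ contains at least two $1$'s,'' and your structural analysis of a single good matrix, coincide with the paper's proof (the paper phrases it as: every $0$ is the only $0$ in its row or the only $0$ in its column). Where you part company with the paper is your key claim that the \emph{whole class} is good if and only if the heavy deficiencies lie entirely on one side, i.e.\ $k=0$ or $l=0$. The parametrization in the statement does \emph{not} confine the heavy part to one side: it explicitly permits $k\ge 1$ and $l\ge 1$ simultaneously (the example following the theorem in the paper has $k=l=1$, with $R=(8,7,7,5,7,7)$ and $S=(6,6,5,5,5,5,5,4)$). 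So your closing assertion that ``any admissible choice of the parameters confines the heavy part to a single side'' misreads the statement, and what you have written is not a proof of the theorem as given but of a strictly stronger characterization that contradicts it.

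The striking point is that your interchange construction for necessity is correct, and it shows the ``if'' direction of the theorem as stated (and the example after it) to be in error: applied to the margins above it yields, e.g.,
\[
\left[\begin{array}{rrrrrrrr}
1&1&1&1&1&1&1&1\\
1&1&0&1&1&1&1&1\\
1&1&1&0&1&1&1&1\\
1&1&1&1&1&1&0&-1\\
1&1&1&1&0&1&1&1\\
1&1&1&1&1&0&1&1
\end{array}\right],
\]
a $(0,\pm 1)$-matrix with row sums $(8,7,7,5,7,7)$ and column sums $(6,6,5,5,5,5,5,4)$ containing a $-1$, so $\mc{A}(R,S)\ne\mc{A}^{\pm}(R,S)$ there. (The paper's sufficiency argument consists of the single sentence ``it is easy to see,'' and it fails precisely because the no-bad-$2\times2$ condition derived for one matrix of the class need not propagate to all matrices of the class when heavy rows and heavy columns coexist; your two-step interchange is exactly the witness.) What you should do is make this explicit: state and prove the corrected criterion ($r_i\ge n-1$ for all $i$, or $s_j\ge m-1$ for all $j$, together with $\mc{A}(R,S)\ne\emptyset$), and note that the displayed parametrization is correct only with the added requirement $k=0$ or $l=0$. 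As written, your proposal silently substitutes the corrected theorem for the stated one, which is the one genuine gap in an otherwise sound argument.
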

\begin{proof}
Let $A \in \mc{A}(R,S)$. Suppose there does not exist a matrix in
$\mc{A}^{\pm}(R,S)$ having a $-1$.  Then every 0 in $A$ must  either be (i) the only 0 in its row, (ii)
the only 0 in its column, or (iii) the only 0 in its row and the only 0 in its column. Otherwise
$A$ has a $2 \times 2$ submatrix with at most one 1 and then an interchange creates a matrix in $\mc{A}^{\pm}(R,S)$ with a $-1$.
So for each $0$ of $A$,  either it is the only 0 in its row or the only
0 in its column, or it is the only 0 in its row and the only 0 in its column. Thus the row and column sum vector of $A$ is of  the form given in the theorem. It is easy to see that if $R$ and $S$ are   of this form,  then  $\mc{A}(R,S)\ne \emptyset$ and  $\mc{A}^{\pm}(R,S)=\mc{A}(R,S)$.
\end{proof} 
 
\medskip
\begin{example}{\rm 
The following $6 \times 8$ matrix $A$ has row sum vector $R=(8,7,7,5,7,7)$ and column sum vector $S=(6,6,5,5,5,5,5,4)$ satisfying the properties specified in Theorem \ref{thm:equal-ARS-pm}.  A $6 \times 8$ $(0,\pm 1)$-matrix with these row and column sum vectors cannot contain a $-1$. 
\[
A=
\left[
\begin{array}{rr|rr|rrrr}
1&1&1&1&1&1&1&1\\ \hline 
1&1&0&1&1&1 &1&1\\ 
1&1&1&0&1&1&1&1 \\ \hline   
1&1&1&1&0 &0&0&1 \\   
1&1&1&1&1&1 &1&0 \\
1&1&1&1&1&1 &1&0
\end{array}
\right].
\]

}
\end{example} \endproof

To conclude this section, we consider another kind of question for the class  $\mc{A}^{\pm}(R,S)$, i.e., the 
$(0,\pm 1)$-matrices with row sum vector $R$ and column sum vector $S$. The following theorem determines the convex hull of this class (in the space $M_n$ of all $n\times n$ real matrices).

\begin{theorem}
 \label{thm:plus-minus-conv} 
  Then the convex hull of ${\mathcal A}^{\pm}(R,S)$ equals the set of $n \times n$ real matrices $A=[a_{ij}]$ satisfying
 \begin{equation}
 \label{eq:pm-conv}
  \begin{array}{cl} \vspace{0.05cm}
     \sum_{j=1}^n a_{ij} =r_i &(1\le i \le n), \\ \vspace{0.05cm}
       \sum_{i=1}^n a_{ij} =s_j &(1\le j \le n), \\ \vspace{0.05cm}
     -1 \le a_{ij} \le 1 &(1\le i,j \le n).
   \end{array}  
 \end{equation}
\end{theorem}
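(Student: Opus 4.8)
The plan is to prove the two inclusions separately. The easy direction is that every member of $\mc{A}^{\pm}(R,S)$ satisfies the system \eqref{eq:pm-conv}: a $(0,\pm1)$-matrix with row sum vector $R$ and column sum vector $S$ trivially meets the two equality families and has every entry in $\{-1,0,1\}\subseteq[-1,1]$. Since the solution set $P$ of \eqref{eq:pm-conv} is convex, it contains $\conv \mc{A}^{\pm}(R,S)$. All the content is therefore in the reverse inclusion $P\subseteq \conv \mc{A}^{\pm}(R,S)$. Because the box constraints make $P$ a bounded polytope, I would reduce this to showing that every vertex (extreme point) of $P$ lies in $\mc{A}^{\pm}(R,S)$; then $P=\conv(\mbox{vertices of }P)\subseteq \conv \mc{A}^{\pm}(R,S)$.

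To show the vertices are integral, I would argue the contrapositive: a point $A=[a_{ij}]\in P$ with at least one non-integer entry cannot be a vertex. First observe that if row $i$ contains a non-integer entry it must contain at least two, since $r_i\in\In$ forces a single non-integer entry to produce a non-integer row sum; the same holds for columns. Hence, in the bipartite graph $G$ whose two vertex classes are the $n$ rows and the $n$ columns and whose edges are the positions $(i,j)$ with $a_{ij}\notin\In$, every non-isolated vertex has degree at least $2$. A finite graph in which every non-isolated vertex has degree at least $2$ contains a cycle, and in the bipartite graph $G$ this cycle has even length and alternates between rows and columns.

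Given such a cycle, I would perturb $A$ along it: traversing the cycle's edges in order and alternately adding $+\epsilon$ and $-\epsilon$ to the corresponding entries yields a matrix $A(\epsilon)$ in which each row and each column met by the cycle has gained one $+\epsilon$ and one $-\epsilon$, so all row and column sums are preserved. Since each entry on the cycle is non-integer and lies in $[-1,1]$, it sits strictly between two consecutive integers; thus for all sufficiently small $\epsilon>0$ both $A(\epsilon)$ and $A(-\epsilon)$ still satisfy $-1\le a_{ij}\le 1$ and hence lie in $P$. As $A=\tfrac12\big(A(\epsilon)+A(-\epsilon)\big)$ with $A(\epsilon)\ne A(-\epsilon)$, the point $A$ is not a vertex. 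Consequently every vertex of $P$ is integral; being confined to $[-1,1]$, its entries lie in $\{-1,0,1\}$, and with row sums $R$ and column sums $S$ it belongs to $\mc{A}^{\pm}(R,S)$. This yields $P\subseteq\conv \mc{A}^{\pm}(R,S)$ and completes the argument.

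The only place demanding real care is the cycle-perturbation step: I must check that the alternating perturbation leaves every prescribed row and column sum unchanged and that small $\epsilon$ keeps every affected entry feasible. A slicker alternative avoids cycles entirely: the equality constraints in \eqref{eq:pm-conv} have as coefficient matrix the vertex-edge incidence matrix of the complete bipartite graph $K_{n,n}$, which is totally unimodular, and adjoining the box constraints preserves total unimodularity; since the right-hand side $(R,S,\pm1)$ is integral, the Hoffman--Kruskal theorem forces all vertices of $P$ to be integral. I would present the elementary cycle argument as the main proof, since it is self-contained, and mention the unimodularity viewpoint as the conceptual explanation behind it.
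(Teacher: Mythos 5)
Your proposal is correct, and it is essentially the paper's argument: the paper proves the theorem in one stroke by citing total unimodularity of the vertex--edge incidence matrix of a bipartite graph (Schrijver, Section 19.3) to conclude that every extreme point of the polytope \eqref{eq:pm-conv} is integral and hence lies in ${\mathcal A}^{\pm}(R,S)$ --- exactly the ``slicker alternative'' you mention at the end. Your cycle-perturbation argument is just the standard self-contained proof of that integrality fact, so the two routes coincide in substance; yours has the minor merit of not requiring an external reference.
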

\begin{proof}
 This follows from the fact that the vertex-edge incidence matrix of a bipartite graph is totally unimodular, see \cite{Schrijver1986}  (Section 19.3). In fact, this general fact implies that each extreme point $A=[a_{ij}]$  of the polyhedron defined by (\ref{eq:pm-conv}) is integral, so $A$ is a $(0, \pm 1)$-matrix satisfying the equations in (\ref{eq:pm-conv}). Therefore the set of extreme points is equal to ${\mathcal A}^{\pm}(R,S)$.
\end{proof}

\section{Bruhat order}
\label{sec:Bruhat}

We return to sign-restricted matrices.
  Recall that ${\mathcal S}^+_{m,n}$ denotes the set of $m\times n$ $(0,1)$-SRMs, equivalently, the set of $m\times n$ SRMs without any $-1$'s. Thus the matrices in ${\mathcal S}^+_{m,n}$
have at most one 1 in each column and there is no restriction on the number of 1's in each row. The matrices in ${\mathcal S}^+_{m,n}$  are the incidence matrices of an {\it ordered partition} $(X_1,X_2,\ldots,X_m)$ of a subset of $\{1,2,\ldots,n\}$ in which, contrary to the usual definition of a partition, some of the parts $X_i$ may be empty. Two extreme cases are
$(\emptyset,\emptyset,\ldots,\emptyset)$ corresponding to the zero matrix $O_{m,n}$ in ${\mathcal S}^+_{m,n}$, and $(\{1,2,\ldots,n\}, \emptyset,\ldots,\emptyset)$ corresponding  to the matrix in ${\mathcal S}^+_{m,n}$  whose first row is all $1$'s and other rows are all $0$'s. 
We can also think of ${\mathcal S}^+_{m,n}$ as a generalization of the set ${\mathcal P}^*_{m,n}$ of $m\times n$ subpermutation matrices
(or, when $m=n$,  the set of $n\times n$ permutation matrices ${\mathcal P}_{n}$) where the restriction of at most one $1$ in each row is removed, but the restriction of at most one 1 in each column is retained. 

Let $n$ be a positive integer. Consider the partially ordered set (actually a distributive lattice) $(\mc{Q}_n,\subseteq)$ of subsets of $\{1,2,\ldots,n\}$ ordered by inclusion. We may identify the elements of $(\mc{Q}_n,\subseteq)$ with the partially ordered set of $2^n$\ $n$-tuples of $0$'s and $1$'s  where $(a_1,a_2,\ldots,a_n)\le (b_1,b_2,\ldots,b_n)$ if and only if $a_{i}\le b_{i}$ for $i=1,2,\ldots,n$.
As is well known, the $n\times n$ permutation matrices are in bijective correspondence with the saturated chains of $(\mc{Q}_n,\subseteq)$  from $\emptyset$ to $\{1,2,\ldots,n\}$; for instance, if $n=4$, then
\[(0,0,0,0)<(0,0,1,0)<(1,0,1,0)<(1,0,1,1)<(1,1,1,1),\]
equivalently,
\[\emptyset\subset \{3\}\subset \{1,3\}\subset \{1,3,4\}\subset \{1,2,3,4\}\]
and this 
corresponds to the permutation $(3,1,4,2)$ and the permutation matrix
\[\left[\begin{array}{c|c|c|c} &&1&\\ \hline 1&&&\\ \hline &&&1\\ \hline &1&&\end{array}\right].\]

There is a similar equivalence for the matrices in ${\mathcal S}^+_{m,n}$  which we now discuss. Consider as above the partially ordered set
$(\mc{Q}_n,\subseteq)$. A {\it  multichain of length $m$} in $(\mc{Q}_n,\subseteq)$ is
a sequence of subsets of $\{1,2,\ldots,n\}$ of the form
\[\emptyset=X_0\subseteq X_1\subseteq X_2\subseteq\cdots\subseteq 
X_m.\]
Notice that the definition of a multichain implies that it starts with $\emptyset$. Since column sums may equal 0, a multichain need not end with $X_m=\{1,2,\ldots,n\}$.
Also in contrast to the usual notion of a chain in a partially ordered set,
in a multichain there may be repeats in the chain.\footnote{If one thinks of a chain of length $m$ as a path of length $m$ of edges in a graph, a multichain may have loops, perhaps more than 1,  at any vertex of that path.}
Let $\mc{C}_{m,n}$ be the set of  all multichains of $(\mc{Q}_n,\subseteq)$ of length $m$. In terms of the identification of $\mc{Q}_n$ as $n$-tuples of $0$' s and $1$'s, a multichain allows for the possibility that successive $n$-tuples are equal.

Generalizing the above, the  matrices in  ${\mathcal S}^+_{m,n}$ are in bijective correspondence with the multichains in $\mc{C}_{m,n}$; for instance,  if $m=4$ and $n=6$, then
\[(0,0,0,0,0,0)\le (0,1,0,0,1,0)\le (0,1,1,0,1,0)\le (0,1,1,0,1,0)\le (1,1,1,1,1,1),\]
equivalently,
\[\emptyset\subseteq \{2,5\}\subseteq  \{2,3,5\}\subseteq  \{2,3,5\}\subseteq \{1,2,3,4,5,6\}\] and this
corresponds to the matrix in ${\mathcal S}^+_{4,6}$ given by
\[\left[\begin{array}{c|c|c|c|c|c}
&1&&&1&\\ \hline
&&1&&&\\ \hline
&&&&&\\ \hline
1&&&1&&1\end{array}\right].\]

There is a partial order, denoted by $\le_B$ and called the {\it Bruhat-order}, on the set $\mc{P}_n$ of $n\times n$ permutation matrices (and other classes of matrices as well including ASMs) which can be defined as follows:

For an $m\times n$ matrix $A=[a_{ij}]$, let the {\it sum-matrix}\footnote{Also called the {\em corner sum matrix} in the literature on ASMs.} of $A$ be
$\Sigma(A)=[\sigma_{ij}(A)]$ where
\[\sigma_{ij}(A)=\sum_{1\le p\le i,\,1\le q\le j}  a_{pq},\quad (1\le i\le m,1\le j\le n), \] the sum of the entries of $A$ in its leading $i\times j$ submatrix.
Then 
\[A_1\le_B A_2 \mbox{ provided that }\Sigma(A_1)\ge \Sigma(A_2)\mbox{ (entrywise).}\]
The partially ordered set $(\mc{P}_n,\le_B)$ is not a lattice  if $n\ge 3$. The {\it Dedekind-MacNeille completion} of $(\mc{P}_n,\le_B)$, the (unique up to isomorphism) smallest lattice extension of $(\mc{P}_n,\le_B)$, was shown  by Lascoux and Sch\H utzenberger \cite{LS} to be  the Bruhat order on the set $\mc{A}_n$ of $n\times n$ ASMs:
\[A_1\le_B A_2\mbox{ if and only if }\Sigma(A_1)\ge \Sigma(A_2),\quad
(A_1,A_2\in \mc{A}_n).\] The minimum element of  the lattice $(\mc{A}_n,\le_B)$ is the $n\times n$ identity matrix and the maximum element is the $n\times n$ anti-identity matrix $L_n$.  In \cite{F} the  Dedekind-MacNeille completion of the poset of partial injective functions was determined. This is similar, but not identical, to our
result in Theorem \ref{thm:D-M-completion}, since the posets considered in \cite{F} are
subposets of ours.

We can extend the Bruhat order, as defined above using the sum-matrix, to  ${\mathcal S}^+_{m,n}$ and  ${\mathcal S}_{m,n}$, thereby obtaining two partially ordered sets
$ ({\mathcal S}^+_{m,n},\le_B)$ and $ ({\mathcal S}_{m,n},\le_B)$.

\begin{example}{\rm \label{ex:bruhat1}
Let $m=n=2$. The set of matrices in $ ({\mathcal S}^+_{2,2},\le_B)$ along with their sum-matrices, indicated by $\rightarrow$, is:
\[(a)\
\left[\begin{array}{cc} 0&0 \\ 0&0\end{array}\right]\rightarrow \left[\begin{array}{cc} 0&0 \\ 0&0\end{array}\right],\quad (b)\
\left[\begin{array}{cc} 1& 0\\ 0&0\end{array}\right]\rightarrow \left[\begin{array}{cc} 1&1 \\ 1&1\end{array}\right],\]
\[(c)\
\left[\begin{array}{cc} 0&1 \\ 0&0\end{array}\right]\rightarrow \left[\begin{array}{cc} 0&1 \\ 0&1\end{array}\right],\quad (d)\
\left[\begin{array}{cc} 0&0 \\ 1&0\end{array}\right]\rightarrow \left[\begin{array}{cc} 0&0 \\ 1&1\end{array}\right],\]
\[ (e)\
\left[\begin{array}{cc} 0&0 \\ 0&1\end{array}\right]\rightarrow \left[\begin{array}{cc} 0&0 \\ 0&1\end{array}\right],\quad  (f)\
\left[\begin{array}{cc} 1&0 \\ 0&1\end{array}\right]\rightarrow \left[\begin{array}{cc} 1&1 \\ 1&2\end{array}\right],\]
\[ (g)\
\left[\begin{array}{cc} 0&1 \\ 1&0\end{array}\right]\rightarrow \left[\begin{array}{cc} 0&1 \\ 1&2\end{array}\right],\quad  (h)\
\left[\begin{array}{cc} 1&1 \\ 0&0\end{array}\right]\rightarrow \left[\begin{array}{cc} 1&2 \\ 1&2\end{array}\right],\]
\[ (i)\
\left[\begin{array}{cc} 0&0 \\ 1&1\end{array}\right]\rightarrow \left[\begin{array}{cc} 0&0 \\ 1&2\end{array}\right].\]

 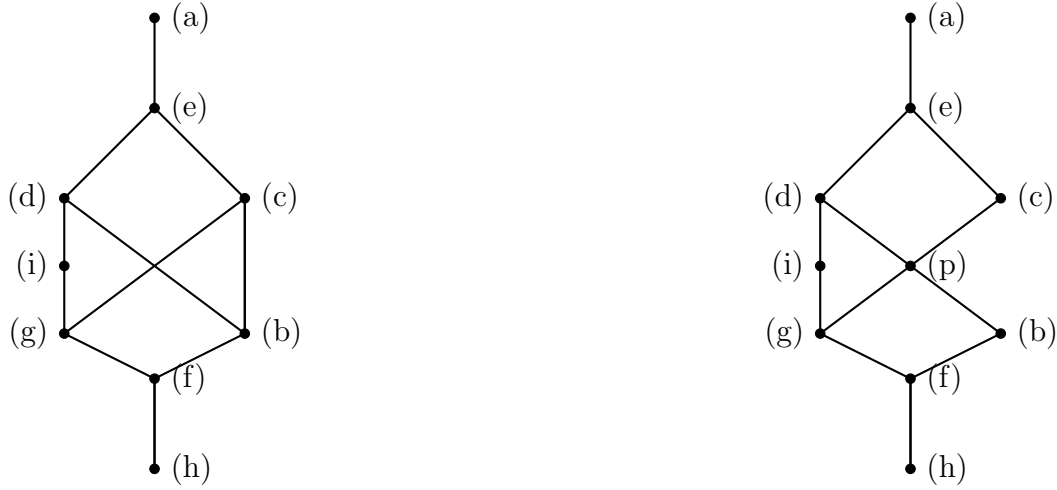
\begin{figure}
\centering
\begin{minipage}[t]{0.33\textwidth}
\begin{tikzpicture}[scale=0.60]
\filldraw (0,0) circle (3pt) node[right=2pt] {(h)};
\filldraw (0,2) circle (3pt) node[right=2pt] {(f)};
\filldraw (2,3) circle (3pt) node[right=2pt] {(b)};
\filldraw (2,6) circle (3pt) node[right=2pt] {(c)};
\filldraw (0,8) circle (3pt) node[right=2pt] {(e)};
\filldraw (0,10) circle (3pt) node[right=2pt] {(a)};
\filldraw (-2,3)circle (3pt) node[left=2pt] {(g)};
\filldraw (-2,4.5) circle (3pt) node[left=2pt] {(i)};
\filldraw (-2,6) circle (3pt) node[left=2pt] {(d)};

 \draw[thick] (0,0) -- (0,2);
 \draw[thick] (2,3) -- (0,2);
 \draw[thick] (2,3) -- (2,6);
 \draw[thick] (0,8) -- (-2,6);
 \draw[thick] (0,8) -- (0,10);
 \draw[thick] (-2,3) -- (-2,4.5);
 \draw[thick] (-2,4.5) -- (-2,6);
 \draw[thick] (-2,3) -- (0,2);
 \draw[thick] (0,0) -- (0,2);
 \draw[thick] (0,8) -- (2,6);
  \draw[thick] (-2,3) -- (2,6);
  \draw[thick] (2,3) -- (-2,6);
   \draw[thick] (2,3) -- (2,6);

  \end{tikzpicture}
\end{minipage}\hfill
\begin{minipage}[t]{0.33\textwidth}
  \begin{tikzpicture}[scale=0.60]
\filldraw (0,0) circle (3pt) node[right=2pt] {(h)};
\filldraw (0,2) circle (3pt) node[right=2pt] {(f)};
\filldraw (2,3) circle (3pt) node[right=2pt] {(b)};
\filldraw (2,6) circle (3pt) node[right=2pt] {(c)};
\filldraw (0,8) circle (3pt) node[right=2pt] {(e)};
\filldraw (0,10) circle (3pt) node[right=2pt] {(a)};
\filldraw (-2,3)circle (3pt) node[left=2pt] {(g)};
\filldraw (-2,4.5) circle (3pt) node[left=2pt] {(i)};
\filldraw (-2,6) circle (3pt) node[left=2pt] {(d)};
\filldraw (0,4.5) circle (3pt) node[right=2pt] {(p)};

 \draw[thick] (0,0) -- (0,2);
 \draw[thick] (2,3) -- (0,2);
 \draw[thick] (0,8) -- (-2,6);
 \draw[thick] (0,8) -- (0,10);
 \draw[thick] (-2,3) -- (-2,4.5);
 \draw[thick] (-2,4.5) -- (-2,6);
 \draw[thick] (-2,3) -- (0,2);
 \draw[thick] (0,0) -- (0,2);
 \draw[thick] (0,8) -- (2,6);
  \draw[thick] (-2,3) -- (2,6);
   \draw[thick] (2,3) -- (-2,6);
  \end{tikzpicture}
\end{minipage}\hfill
 \caption{Hasse diagram of $(\mc{S}^+_{2,2},\preceq_B)$ and $(\mc{S}_{2,2},\preceq_B)$.}
 \label{fig:one}
\end{figure}

Examining Figure \ref{fig:one} we see that every pair of elements except $\{b,g\}$ has a unique LUB (least upper bound in the Bruhat order), and every pair of elements except $\{c,d\}$ has a unique GLB (greatest lower bound in the Bruhat order).
There is only one matrix in ${\mathcal S}_{2,2}$ that is not in ${\mathcal S}^+_{2,2}$, namely the matrix $(p)$ indicated below with its sum-matrix:
\[ (p)\
\left[\begin{array}{cr} 0&1 \\ 1&-1\end{array}\right]\rightarrow \left[\begin{array}{cc} 0&1 \\ 1&1\end{array}\right].\] 
We see that $\Sigma(p)\ge \Sigma(c)$, $\Sigma(p)\ge\Sigma(d)$, $\Sigma(p)\le \Sigma(b)$, and $\Sigma(p)\le \Sigma(g)$. We conclude that $(p)$ is the GLB of $(c)$ and $(d)$, and $(p)$ is the LUB of $(b)$ and $(g)$ in $(\mc{S}_{2,2}),\le_B)$  and that $(\mc{S}_{2,2},\le_B)$ is a lattice; indeed  $(\mc{S}_{2,2},\le_B)$ is therefore the Dedekind-MacNeille completion of $(\mc{S}^+_{2,2},\le_B)$; see Figure \ref{fig:one}.
}\hfill{$\Box$}\end{example}

It follows that $(\mc{P}_n,\le_B)$ is a subposet of
$(\mc{S}_{n,n},\le_B)$,  and $(\mc{P}^*_{m,n},\le_B)$ is  a subposet of
$(\mc{S}_{m,n},\le_B)$. Clearly, the maximal element of both  $(\mc{P}_n,\le_B)$ and  $(\mc{P}^*_{m,n},\le_B)$ is $O_{m,n}$, and the minimal element is the $m\times n$ matrix
$\Upsilon_{m,n}$ with all 1's in row 1 and 0's elsewhere. We have that
$\Sigma(\Upsilon_{m,n})$ has all of its rows equal to $(1,2,\ldots,n)$ and hence the sum of the entries of $\Sigma(\Upsilon_{m,n})$  equals 
$m{{n+1}\choose 2}$.

We use the notations $A\vee B ={\rm LUB}\{A,B\}$ and $A\wedge B={\rm GLB}\{A,B\}$ for $A$ and $B$ in a lattice.
Also $a \vee b = \max\{a,b\}$ and $a \wedge b = \min\{a,b\}$ for real numbers $a, b$.

\begin{theorem}
\label{thm:D-M-completion}
  $(\mc{S}_{m,n}, \le_B)$ is a distributive lattice, and it  is  the Dedekind-MacNeille completion of $(\mc{S}^+_{m,n},\le_B)$.
\end{theorem}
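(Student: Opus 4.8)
The plan is to pass to the sum-matrix map $A\mapsto\Sigma(A)$, which by the definition of $\le_B$ is an \emph{order-reversing} injection of $(\mc{S}_{m,n},\le_B)$ into the set of $m\times n$ integer matrices equipped with the entrywise order $\le$; the latter is a distributive lattice (a product of copies of the chain $\mb{Z}$), with join and meet given by entrywise $\max$ and $\min$. First I would identify the image $T:=\Sigma(\mc{S}_{m,n})$. Using the convention $\sigma_{0j}=\sigma_{i0}=0$ and the identities $\sigma_{ij}-\sigma_{i,j-1}=\sum_{p\le i}a_{pj}$ (a partial column sum) and $\sigma_{ij}-\sigma_{i-1,j}=\sum_{q\le j}a_{iq}$ (a partial row sum), the defining conditions of an SRM translate \emph{exactly} into: (a) $\sigma_{ij}-\sigma_{i,j-1}\in\{0,1\}$, and (b) $\sigma_{ij}-\sigma_{i-1,j}\ge 0$. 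Conversely, any integer matrix satisfying (a)--(b) is $\Sigma(A)$ for a unique SRM $A$, recovered through $a_{ij}=\sigma_{ij}-\sigma_{i-1,j}-\sigma_{i,j-1}+\sigma_{i-1,j-1}$ (which lies in $\{-1,0,1\}$ precisely because of (a)). The subclass $\mc{S}^+_{m,n}$ is cut out by the one extra inequality $a_{ij}\ge 0$, i.e. $T^+:=\Sigma(\mc{S}^+_{m,n})$ consists of those matrices in $T$ with $\sigma_{ij}-\sigma_{i,j-1}\ge\sigma_{i-1,j}-\sigma_{i-1,j-1}$.

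For the lattice/distributivity claim, I would show that $T$ is closed under entrywise $\max$ and $\min$. This is a short monotonicity check: for (b) it is immediate since $\max$ and $\min$ are monotone, and for (a) one argues, say for $\max$, that if $\max(\sigma_{ij},\sigma'_{ij})=\sigma_{ij}$ then $\max(\sigma_{ij},\sigma'_{ij})-\max(\sigma_{i,j-1},\sigma'_{i,j-1})$ lies between $0$ and $\sigma_{ij}-\sigma_{i,j-1}\le 1$, using $\sigma_{ij}\ge\sigma'_{ij}\ge\sigma'_{i,j-1}$; the $\min$ case is symmetric. Hence $T$ is a finite sublattice of a distributive lattice, so $T$ is itself distributive, and since $\Sigma$ is an order-reversing bijection onto $T$, its domain $(\mc{S}_{m,n},\le_B)$ is the order-dual of $T$ and therefore also a distributive lattice; explicitly $A\vee B=\Sigma^{-1}(\min(\Sigma(A),\Sigma(B)))$ and $A\wedge B=\Sigma^{-1}(\max(\Sigma(A),\Sigma(B)))$.

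For the Dedekind--MacNeille claim I would use the standard criterion that it suffices to prove $\mc{S}^+_{m,n}$ is both join-dense and meet-dense in the lattice $\mc{S}_{m,n}$; via $\Sigma$ this is equivalent to showing that every $\Sigma(A)\in T$ is both the entrywise $\min$ of the matrices of $T^+$ lying above it and the entrywise $\max$ of those lying below it. It is enough to produce, for each fixed entry $(i_0,j_0)$, two members of $T^+$ agreeing with $\Sigma(A)$ there while staying on the correct side everywhere. Writing $t=\sigma_{i_0 j_0}(A)$ and $Z=\{\,j\le j_0:\sigma_{i_0 j}-\sigma_{i_0,j-1}=1\,\}$, the ``max below'' witness is the $(0,1)$-SRM whose only nonzero row is row $i_0$, equal to the indicator of $Z$; its corner sums are $|Z\cap\{1,\dots,j\}|$ for $i\ge i_0$ and $0$ otherwise, which are $\le\sigma_{ij}(A)$ and equal $t$ at $(i_0,j_0)$. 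The ``min above'' witness is the $(0,1)$-SRM given by the multichain $Y_1=\cdots=Y_{i_0}=\{1,\dots,t\}\cup\{\,j>j_0:\sigma_{i_0 j}-\sigma_{i_0,j-1}=1\,\}$ and $Y_{i}=\{1,\dots,n\}$ for $i>i_0$; its corner sums are $\ge\sigma_{ij}(A)$ with equality at $(i_0,j_0)$. Both verifications reduce to the prefix-count inequalities $|Z\cap\{1,\dots,j\}|\le\sigma_{i_0 j}\le\sigma_{ij}$, using that $\Sigma(A)$ is coordinatewise nondecreasing and that the row-$i_0$ partial column sums rise by at most $1$ per step (condition (a)).

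Granting the two density statements, $T^+$ is join- and meet-dense in the lattice $T$, so $T$ is the Dedekind--MacNeille completion of $T^+$; dualizing through the anti-isomorphism $\Sigma$ then yields that $(\mc{S}_{m,n},\le_B)$ is the Dedekind--MacNeille completion of $(\mc{S}^+_{m,n},\le_B)$. I expect the main obstacle to be pinning down the image description (a)--(b) precisely and then checking that the two explicit witnesses satisfy \emph{all} corner-sum inequalities simultaneously; once the monotonicity of $\Sigma(A)$ and the step-size-$\le 1$ property are in hand these checks are routine, so the genuinely delicate point is choosing the witnesses so that equality at $(i_0,j_0)$ is forced while the one-sided domination is preserved everywhere else.
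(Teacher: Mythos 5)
Your proposal is correct, and its first half is in substance the paper's argument: both pass to the sum-matrix $\Sigma$, observe that $\le_B$ is the reverse of the entrywise order on sum-matrices, and verify closure of the image under entrywise $\max$/$\min$ using exactly the facts that the column increments $\sigma_{ij}-\sigma_{i,j-1}$ lie in $\{0,1\}$ and the row increments are nonnegative. You package this as ``the image of $\Sigma$ is a sublattice of $\mathbb{Z}^{m\times n}$, hence distributive,'' while the paper instead takes $T=\max\{\Sigma(A),\Sigma(A')\}$ and directly checks that $\Sigma^{-1}(T)$ is an SRM (alternation of signs in each column via the set $I^+_j$, nonnegative partial row sums); these are the same computation. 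Where you genuinely diverge is the Dedekind--MacNeille step. The paper establishes only meet-density: each $A\in\mc{S}_{m,n}$ is the meet of the $m$ matrices $A^{(i)}\in\mc{S}^+_{m,n}$ obtained by recording the increases along row $i$ of $\Sigma(A)$ (your ``max below'' witnesses are a per-entry truncation of the same construction). You additionally invoke the standard criterion that the completion is characterized by being \emph{both} join-dense and meet-dense, and you supply the ``min above'' witnesses needed for join-density; your verification of these (using $\sigma_{ij}\le j$, monotonicity in both indices, and $t\le j_0$) is sound. This extra half is not in the paper, and since meet-density alone does not in general characterize the Dedekind--MacNeille completion, your version is the logically complete form of the argument; the cost is only the bookkeeping of a second family of explicit witnesses.
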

\begin{proof}
  Let $A, A' \in \mc{S}_{m,n}$ and let $S=\Sigma(A)=[s_{ij}]$, $S'=\Sigma(A')=[s'_{ij}]$.  Since $f: M_{m,n} \rightarrow M_{m,n}$ given by $C \rightarrow \Sigma(C)$ is an isomorphism, there is a unique matrix $C=[c_{ij}]$ such that $\Sigma(C)=T$ where $T=\max\{S,S'\}$. Then $C$ is integral; this follows from the facts that $T$ is integral and $f^{-1}$ maps integral matrices to integral matrices. We show   further properties of $C$.

Let $1 <i \le m$, $1<j \le n$. Consider the leading $i \times j$ submatrix of $A$ and partition it by the first $(j-1)$ columns and the last one, and similar for the rows. The sum of the entries in each of the four blocks of this submatrix are indicated in the following diagram  %
 \begin{center}
 \begin{tabular}{|c|c|} \hline
      $s_{i-1,j-1}$ & $s_1$  \\  \hline
     $s_2$ & $a_{ij}$ \\  \hline
 \end{tabular}   
 \end{center}
 where $s_1=\sum_{r=1}^{i-1} a_{rj}$ and $s_2=\sum_{s=1}^{j-1} a_{is}$.  
 Similarly, for $A'$, we obtain the four sums $s'_{i-1,j-1}$, $a'_{ij}$, $s'_1=\sum_{r=1}^{i-1} a'_{rj}$ and $s'_2=\sum_{s=1}^{j-1} a'_{is}$. Here $s_1, s'_1 \in \{0,1\}$  as  $A, A' \in \mc{S}_{m,n}$.  
Similarly, $s_1+a_{ij}, s'_1+a'_{ij}  \in \{0,1\}$ and each of the four numbers $s_2, s_2+a_{ij}, s'_2, s'_2+a'_{ij}$ are nonnegative. 

 Consider the matrix $C=[c_{ij}]$ defined above.
  Then
\begin{equation}
\label{eq:c_ij}
  c_{ij} = t_{ij}-t_{i-1,j}-t_{i,j-1}+t_{i-1,j-1}
\end{equation}
where 
 \[
\begin{array}{ll}
   t_{i-1,j-1} &=  s_{i-1,j-1}  \vee s'_{i-1,j-1} \\*[\smallskipamount]
   t_{i-1,j} &=  (s_{i-1,j-1}+s_1)  \vee (s'_{i-1,j-1}+s'_1) \\*[\smallskipamount]
   t_{i,j-1} &=  (s_{i-1,j-1}+s_2)  \vee (s'_{i-1,j-1}+s'_2) \\*[\smallskipamount]
   t_{ij} &=  (s_{i-1,j-1}+s_1+s_2+a_{ij})  \vee (s'_{i-1,j-1}+s'_1+s'_2+a'_{ij}).
\end{array}
\] 
Observe that $t_{i-1,j} -t_{i-1,j-1}  \in \{0,1\}$ as $s_1, s'_1 \in \{0,1\}$. Similarly, 
$t_{ij} -t_{i,j-1}  \in \{0,1\}$ as $s_1+a_{ij}, s'_1+a'_{ij} \in \{0,1\}$. Therefore 
\[
     c_{ij} = (t_{ij}-t_{i,j-1})-(t_{i-1,j} -t_{i-1,j-1}) \in \{-1,0,1\}.
\]
Thus, $C$ is a $(0,\pm 1)$-matrix. This also shows that 
\begin{equation}
\label{eq:C-properties}
   c_{ij} = \left\{
     \begin{array}{rl}
             1 &\mbox{\rm  when $t_{ij}=t_{i,j-1}+1$ and $t_{i-1,j}=t_{i-1,j-1}$}, \\
           -1 &\mbox{\rm  when $t_{ij}=t_{i,j-1}$ and $t_{i-1,j}=t_{i-1,j-1}+1$}, \\
             0 &\mbox{\rm otherwise. }
     \end{array}   
     \right.      
\end{equation}
Here $t_{0j}=0$ for each $j$ and $t_{i0}=0$ for each $i$ (as for $A$ and $A'$). 

We now prove that $C \in \mc{S}_{m,n}$. First, let $1<j \le n$. Define $I^+_j=\{i: t_{ij} =t_{i,j-1}+1\}$, and note that $0 \not \in I^+_j$. 
 Assume $i-1 \not \in I^+_j, i \in I^+_j,  i+1 \in I^+_j, \ldots,  k \in I^+_j, k+1 \not \in I^+_j$. From (\ref{eq:C-properties}) we get
 \[
   c_{ij}=1, \; c_{i+1,j}=c_{i+2,j}= \cdots = c_{kj}=0, \; c_{k+1,j}=-1.
 \]
 This implies that the nonzeros in column $j$   alternates between 1 and $-1$, starting with a $1$ (if any) as $0 \not \in I^+_j$. Also, the first column of $T$ is the maximum of the first column in $A$ and the first column in $A'$, and therefore the first column in $C$ is either zero or it contains a single 1.

Next, let $1\le i \le m$ and $1 \le k \le n$. Then 
\[
\begin{array}{rl}
  \sum_{j=1}^k c_{ij} & =t_{ik}-t_{i-1,k} \\*[\smallskipamount]
     &=(s_{ik} \vee s'_{ik})-(s_{i-1,k} \vee s'_{i-1,k}) \\*[\smallskipamount]
     & \ge 0
 \end{array} 
\]
as $s_{ik} \ge s_{i-1,k}$ and $s'_{ik} \ge s'_{i-1,k}$, again due to $A, A' \in \mc{S}_{m,n}$. This proves that $C \in \mc{S}_{m,n}$.

Thus, we have that each pair $A, A'$ of matrices in $\mc{S}_{m,n}$ has a unique greatest lower bound (meet) in the Bruhat order, given by the matrix $C$ above. Since $(\mc{S}_{m,n}, \le_B)$ is a finite partially ordered set (or use a similar argument)  the corresponding statement for least upper bound (join) holds as well. Therefore $(\mc{S}_{m,n}, \le_B)$ is a lattice. In order that $(\mc{S}_{m,n}, \le_B)$ be distributive, we must have
\[A_1\wedge (A_2\vee A_3)=(A_1\wedge A_2)\vee (A_1\wedge A_3)\quad (A_1,A_2,A_3\in \mc{S}_{m,n}).\]
The corresponding property for the real numbers with the usual $\le$ order relation holds. Since the join and meet in $(\mc{S}_{m,n}, \le_B)$ is componentwise on the sum-matrices, it follows that $(\mc{S}_{m,n}, \le_B)$ is distributive.

  It remains to prove  that $(\mc{S}_{m,n}, \le_B)$ is the Dedekind-MacNeille completion of $(\mc{S}^+_{m,n},\le_B)$. This will follow by showing that any given matrix $A \in \mc{S}_{m,n}$ is the meet of some set of  matrices in $\mc{S}^+_{m,n}$.
  
  Let $S=\Sigma(A)=[s_{ij}]$. For $i=1, 2, \ldots, m$, let $S^{(i)}$ be the $m \times n$ matrix whose first $(i-1)$ rows are zero and each of the $(m-i+1)$ remaining rows are equal to row $i$ of $S$ (so these rows are equal). Then clearly 
\[
   S= \max\{S^{(1)}, S^{(2)}, \ldots, S^{(m)}\}
\]   
as $S$ has monotone columns. Now, $S^{(i)}=\Sigma(A^{(i)})$ where $A^{(i)}$ is the $(0,1)$-matrix whose only nonzero row is row $i$, and it contains a 1 in position $(i,j)$ precisely when row $i$ in $S$ has an increase in column $j$, i.e., $s_{i,j-1}<s_{ij}$  $(j \le n)$ (where we think of a zero'th row and column of $S$ to contain only zeros). Note that every increase in $S$ is 1 as the columns in $A$ have alternating signs, so $A^{(i)} \in \mc{S}^+_{m,n}$. Therefore $A$ is the meet of $\{A^{(i)}: i \le m\}$ in the Bruhat order.  
Thus  $({\mathcal S}_{m,n},\le_B)$ is the Dedekind-MacNeille completion of $({\mathcal S}_{m,n}^+,\le_B)$.
\end{proof}

The construction in the final part of the proof is illustrated by the next example. 

\begin{example} \label{ex:feng}
{\rm 
Consider the following matrix $A$ in  ${\mathcal S}_{6,6}$ with its $\Sigma(A)$:

\[A=\left[\begin{array}{r|r|r|r|r|r}
&1&&1&1&\\ \hline
&&1&-1&&1\\ \hline
1&&-1&1&&-1\\ \hline
&&1&&-1&1\\ \hline
&&&&1&-1\\ \hline
&&&&&1\end{array}\right]\rightarrow \Sigma(A)=
\left[\begin{array}{c|c|c|c|c|c}
0&1&1&2&3&3\\ \hline
0&1&2&2&3&4\\ \hline

1&2&2&3&4&4\\ \hline

1&2&3&4&4&5\\ \hline

1&2&3&4&5&5\\ \hline

1&2&3&4&5&6\end{array}\right].
\]
Then  $A$ is the meet of  six matrices $A^{(1)}, A^{(2)}, \ldots,  A^{(6)}$ in ${\mathcal S}_{6,6}^+$:
 These matrices have only one nonzero row, with a different row number in each case, as specified below:
  
 row 1: $[0, 1, 0, 1, 1, 0]$
 
 row 2: $[0, 1, 1, 0, 1, 1 ]$
 
 row 3: $[1, 1, 0, 1, 1, 0]$
 
 row 4: $[1, 1, 1, 1, 0, 1]$
 
 row 5: $[1, 1, 1, 1, 1, 0]$
 
 row 6: $[1, 1, 1, 1, 1, 1]$.
 
 These rows are obtained from the increases in the corresponding rows of $\Sigma(A)$ where one has to imagine a zeroth column of all 0's.
 So for instance, 
 \[
 \Sigma(A^{(3)})=
 \left[\begin{array}{c|c|c|c|c|c}
0&0&0&0&0&0\\ \hline
0&0&0&0&0&0 \\ \hline
1&2&2&3&4&4\\ \hline
1&2&2&3&4&4\\ \hline
1&2&2&3&4&4\\ \hline
1&2&2&3&4&4\end{array}\right].\]
 Then 
\[\max\{\Sigma(A^{(1)}),\Sigma(A^{(2)}),\ldots, \Sigma(A^{(6)})\}=\Sigma(A)\]
and in $({\mathcal S},\le_B)$, the  matrix $A$ is the meet of $A^{(1)}, A^{(2)}, \ldots,  A^{(6)}$.
} \endproof
\end{example}

A finite lattice has a unique smallest element called its {\it zero element}.
A {\it join-irreducible element} of a finite lattice is a nonzero element of the lattice which cannot be expressed as the join of two elements different from it.  A nonzero element is join-irreducible if and only if it covers exactly one element in the lattice. A {\it meet-irreducible element} is defined analogously. These and other properties can be found in \cite{DP90}.
It is straightforward to verify that the set of  meet-irreducible elements of ${\mathcal S}_{m,n}$ are the matrices in ${\mathcal S}_{m,n}^+$ with exactly one nonzero row. This follows as in Example \ref{ex:feng}, and using the observation that two distinct $(0,1)$-vectors of the same size have different sum-matrices. 

By Birkhoff's representation theorem for finite distributive lattices (Theorem 8.17 in \cite{DP90}), the  lattice  $({\mathcal S}_{m,n},\le_B)$ can be respresented as the lattice $({\mathcal J}_{m,n},\subseteq)$ whose elements are the set
${\mathcal J}_{m,n}$ of join-irreducible elements of 
$({\mathcal S}_{m,n},\le_B)$ where we identify each element $x$ with the set $\{u\le_B x: u\in {\mathcal J}_{m,n}\}$ of join-irreducibles below $x$, and the partial order is that of set-containment. Equivalently, we   represent $({\mathcal S}_{m,n},\le_B)$  as the lattice  whose elements are the set
${\mathcal M}_{m,n}$ of meet-irreducible elements of 
$({\mathcal S}_{m,n},\le_B)$ where we identify each element $x$ with the set $\{u\ge_B x: u\in {\mathcal M}_{m,n}\}$ of meet-irreducibles above $x$, and the partial order is that of reverse set-containment.

\begin{example}\label{ex:bruhat1more}{\rm We continue with the Example \ref{ex:bruhat1} and show the two representations described above, using $(0,1)$-vectors instead of sets, where we omit the $0$'s for clarity:
\[({\mathcal J}_{m,n},\subseteq):\quad
\begin{array}{c||c|c|c|c|c|c}
&(a)&(b)&(c)&(f)&(g)&(i)\\ \hline\hline
(a)&1&1&1&1&1&1\\ \hline
(b)&&1&&1&&\\ \hline
(c)&&1&1&1&1&\\ \hline
(d)&&1&&1&1&1\\ \hline
(e)&&1&1&1&1&1\\ \hline
(f)&&&&1&&\\ \hline
(g)&&&&1&1&\\ \hline
(h)&&&&&&\\ \hline
(i)&&&&1&1&1\\ \hline
(p)&&1&1&1&&\end{array}\]

\[({\mathcal M}_{m,n},\supseteq):\quad
\begin{array}{c||c|c|c|c|c|c}
&(b)&(c)&(d)&(e)&(h)&(i)\\ \hline\hline
(a)&&&&&&\\ \hline
(b)&1&1&1&1&&\\ \hline
(c)&&1&&1&&\\ \hline
(d)&&&1&1&&\\ \hline
(e)&1&&&&&\\ \hline
(f)&1&1&1&1&&1\\ \hline
(g)&&1&1&1&&1\\ \hline
(h)&1&1&1&1&1&1\\ \hline
(i)&&&1&1&&1\\ \hline
(p)&&1&1&1&&\end{array}\]

}\hfill{$\Box$}
\end{example}

Next, we characterize the sum-matrices for the class $\mc{S}^+_{m,n}$.

\begin{lemma}
 \label{lem:Bruhat-sum-char} 
  Let $S=[s_{ij}]$ be a nonnegative integral $m \times n$ matrix. Then $S$  is the sum-matrix of a matrix in $\mc{S}^+_{m,n}$ if and only if 
  \begin{equation}
   \label{eq:sum-char}
   \begin{array}{rll}
    s_{i,j-1} + s_{i-1,j} - s_{i-1,j-1}&\le s_{ij}      &(i \le m, j \le n), \\*[\smallskipamount]
    s_{mj} &\le s_{m,j-1}+1 &(j \le n), 
   \end{array} 
  \end{equation} 
  where we define $s_{0j}=s_{i0}=0$ for each $i$ and $j$.
\end{lemma}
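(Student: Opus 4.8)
The plan is to exploit the bijection between a matrix and its sum-matrix, noting (exactly as in the proof of Theorem \ref{thm:D-M-completion}) that $\Sigma$ is an isomorphism of $M_{m,n}$ whose inverse recovers the entries of $A=[a_{ij}]$ from $S=[s_{ij}]$ by the mixed second difference
\[
  a_{ij} = s_{ij} - s_{i-1,j} - s_{i,j-1} + s_{i-1,j-1},
\]
using the convention $s_{0j}=s_{i0}=0$. Thus every nonnegative integral $S$ is the sum-matrix of \emph{some} real matrix automatically; the whole content of the lemma is to show that the two inequalities in (\ref{eq:sum-char}) are precisely the conditions under which this uniquely determined $A$ lies in $\mc{S}^+_{m,n}$.

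First I would observe that the first inequality in (\ref{eq:sum-char}) is, after rearranging, exactly the statement $a_{ij}\ge 0$ for all $i\le m$, $j\le n$. Next I would introduce the partial column sums
\[
  c_{ij} := s_{ij} - s_{i,j-1} = \sum_{p=1}^{i} a_{pj},
\]
so that $c_{0j}=0$ and $c_{ij}-c_{i-1,j}=a_{ij}$, and note that the second inequality $s_{mj}\le s_{m,j-1}+1$ is exactly $c_{mj}\le 1$, i.e.\ the full sum of column $j$ is at most $1$. These two translations reduce the lemma to a statement about the recovered matrix $A$.

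For necessity, if $A\in\mc{S}^+_{m,n}$ then $A$ is a $(0,1)$-matrix, so $a_{ij}\ge 0$ gives the first inequality, and each column contains at most one $1$, so $c_{mj}\le 1$ gives the second. For sufficiency I would argue from (\ref{eq:sum-char}) that the recovered $A$ lies in $\mc{S}^+_{m,n}$: the first inequality gives $a_{ij}\ge 0$, whence $c_{ij}$ is nondecreasing in $i$ and starts at $c_{0j}=0$; combined with the integrality of $S$ and the bound $c_{mj}\le 1$ from the second inequality, monotonicity forces $c_{ij}\in\{0,1\}$ for every $i$. Consequently each $a_{ij}=c_{ij}-c_{i-1,j}$ lies in $\{0,1\}$, so $A$ is a $(0,1)$-matrix, and every partial column sum $c_{ij}$ equals $0$ or $1$, which is exactly the defining condition for membership in $\mc{S}^+_{m,n}$ (the partial row sums being trivially nonnegative for a $(0,1)$-matrix). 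Since $\Sigma(A)=S$ by construction, this settles both directions.

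The step I expect to carry the real weight is the monotonicity-and-integrality argument pinning $c_{ij}$ to $\{0,1\}$; everything else is a direct dictionary between the entrywise inequalities on $S$ and the sign/size constraints defining $\mc{S}^+_{m,n}$. It is worth double-checking the edge cases $i=1$ and $j=1$ against the zeroth-row/column convention, since at $i=1$ the first inequality degenerates to $s_{1,j-1}\le s_{1j}$ (that is, $a_{1j}\ge 0$) and one must confirm that the first column of $S$ then carries at most a single $1$, as required for an SRM.
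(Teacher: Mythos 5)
Your proof is correct and follows essentially the same route as the paper's: both directions rest on inverting $\Sigma$ via the mixed second difference, reading the first inequality as $a_{ij}\ge 0$ and the second as the column sum bound, and then using nonnegativity plus integrality to pin the column entries down to a single $1$ at most. The only cosmetic point is your closing remark about ``the first column of $S$'' (you mean the first column of $A$, and the general column argument already covers $j=1$), but this does not affect the proof.
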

\begin{proof}
Let $A \in \mc{S}^+_{m,n}$ and let  $S=\Sigma(A)$,  $S=[s_{ij}]$. Then the first set of constraints in (\ref{eq:sum-char}) holds as $A$ is nonnegative and $a_{ij}=s_{ij}-s_{i-1,j}-s_{i,j-1}+s_{i-1,j-1}$ for each $i,j$. 
The other constraints hold as  each column of $A$ has at most one  1.

Conversely, assume $S$ satisfies (\ref{eq:sum-char}). 
As mentioned, the linear map $T: M_{m,n} \rightarrow M_{m,n}$ given by $T(A)=\Sigma(A)$ is an isomorphism, there is a unique matrix $A \in M_{m,n}$ such that $T(A)=S$, and this $A=[a_{ij}]$ is given by $a_{ij}=s_{ij}-s_{i-1,j}-s_{i,j-1}+s_{i-1,j-1}$ for each $i,j$. This matrix $A$ is integral. Also, the first set of constraints in (\ref{eq:sum-char}) implies $a_{ij} \ge 0$ for each $i,j$. Moreover, the second set of constraints in (\ref{eq:sum-char}) gives  
\[
   \sum_{i=1}^m a_{ij} = s_{mj}-s_{m,j-1} \le 1
\]
which (as $A$ is nonnegative and integral) means that $A$ is a $(0,1)$-matrix with at most one 1 in every column, so $A \in \mc{S}^+_{m,n}$, as desired.
\end{proof}

Let $A\in {\mathcal A}(R,S)$. A {\em Bruhat interchange} (applied to $A$) is to replace a submatrix 
\[
 \left[
 \begin{array}{rr}
   0 & 1 \\
   1 & 0
 \end{array}  
 \right]
\]
by the identity matrix of order 2.

\begin{lemma}
\label{lem:Bruhat-equal}
Let $R$ be an $m$-vector of nonnegative integers with sum $n$, and let $S=(1,1,\ldots,1) \in \mb{R}^n$. 
Let $A,C\in {\mathcal A}(R,S)$. Then $\Sigma(A)\ge \Sigma(C)$ if and only if $C$ can be transformed into $A$ by Bruhat interchanges.
\end{lemma}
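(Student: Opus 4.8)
The plan is to prove the two implications separately, with the reverse direction carrying the real work through an induction that performs one carefully chosen Bruhat interchange at a time. Throughout, since $S=(1,\ldots,1)$ each column of $A$ and of $C$ has a single $1$, and I write $a(j)$ (resp.\ $c(j)$) for the row of that $1$ in column $j$ of $A$ (resp.\ $C$).

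For the forward (easy) direction I would first record the effect of one Bruhat interchange on the sum-matrix. If the interchange acts on rows $i_1<i_2$ and columns $j_1<j_2$ of $C$ (so before the move $c_{i_2 j_1}=c_{i_1 j_2}=1$, and afterwards $c_{i_1 j_1}=c_{i_2 j_2}=1$), a direct computation shows that $\Sigma$ increases by exactly $1$ on the rectangle $\{(i,j): i_1\le i<i_2,\ j_1\le j<j_2\}$ and is unchanged elsewhere. Hence every Bruhat interchange weakly increases $\Sigma$ entrywise, so if $C$ can be transformed into $A$ by such interchanges then $\Sigma(A)\ge\Sigma(C)$.

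For the converse I would induct on the nonnegative integer $N=\sum_{i,j}\bigl(\sigma_{ij}(A)-\sigma_{ij}(C)\bigr)$. When $N=0$ we have $\Sigma(A)=\Sigma(C)$, hence $A=C$ since $C\mapsto\Sigma(C)$ is injective, and there is nothing to prove. When $N>0$ the goal is to exhibit a \emph{safe} Bruhat interchange on $C$, i.e.\ one producing $C'$ with $\Sigma(C)\le\Sigma(C')\le\Sigma(A)$; then $C'\in\mc{A}(R,S)$, the quantity $N$ strictly decreases, and the induction hypothesis transforms $C'$, hence $C$, into $A$. By the rectangle computation above, an interchange is safe precisely when its rectangle of increase lies inside the strict region $D=\{(i,j):\sigma_{ij}(A)>\sigma_{ij}(C)\}$. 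Proving that a safe interchange always exists is the crux, and the obstacle is genuine: the two most natural choices---moving the top-left misplaced $1$ directly to its target row, or swapping it with the nearest column carrying a smaller entry---can each overshoot, placing part of their rectangle outside $D$.

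To locate a safe interchange I would use the leftmost column $j_0$ in which $A$ and $C$ differ. Since the columns before $j_0$ agree, $\sigma_{p,j_0}(A)-\sigma_{p,j_0}(C)=[a(j_0)\le p]-[c(j_0)\le p]$, and nonnegativity forces $v:=a(j_0)<c(j_0)=:w$; moreover $D$ meets column $j_0$ in exactly the rows $p\in[v,w)$, and $\sigma_{v-1,j_0}(A)=\sigma_{v-1,j_0}(C)$. A counting argument---the value $v$ used by $A$ in column $j_0$ must reappear in $C$ somewhere to the right of $j_0$---shows there is a column past $j_0$ whose $C$-entry lies in $[v,w)$; let $j_2$ be the \emph{smallest} such column and interchange columns $j_0$ and $j_2$ (a legitimate Bruhat interchange because all column sums equal $1$). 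The key verification is that the rectangle $[c(j_2),w)\times[j_0,j_2)$ lies in $D$: by minimality of $j_2$ every column strictly between $j_0$ and $j_2$ has $C$-entry outside $[v,w)$, so for any threshold $i\in[v,w)$ the $C$-entries counted in the window $(j_0,j]$ are exactly those below $v$; comparing against threshold $v-1$, where $\sigma_{v-1,j_0}(A)=\sigma_{v-1,j_0}(C)$, the global inequality $\sigma_{v-1,j}(A)\ge\sigma_{v-1,j}(C)$ supplies the one extra unit needed on the whole rectangle. I expect this last step---fixing the partner column $j_2$ and checking the rectangle stays within $D$---to be the main technical hurdle, while the remainder is bookkeeping with the sum-matrix.
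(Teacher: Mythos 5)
Your proof is correct and takes essentially the same approach as the paper: an induction that peels off one Bruhat interchange at a time, with the key step being the verification that the intermediate matrix stays sandwiched between $\Sigma(C)$ and $\Sigma(A)$. The only difference is cosmetic/dual — the paper applies an inverse Bruhat interchange to $A$ (choosing the partner column via the uppermost $1$ of $A$ in a certain region), whereas you apply a Bruhat interchange to $C$ (choosing the leftmost later column whose $C$-entry lies in $[v,w)$), and your verification that the increase rectangle lies in $D$ checks out.
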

\begin{proof} 
  The assumptions on  $R$ and $S$ assure that $\mc{A}(R,S)$ is nonempty. 
	If $A$ can be transformed into $C$ by Bruhat interchanges, then clearly $\Sigma(A)\ge \Sigma(C)$. Now suppose that $\Sigma(A)\ge \Sigma(C)$. Let the $k$'th row be the first row where $ \Sigma(A)$ and $\Sigma(C)$ differ, and let $l$ be the first position in row $k$ where they differ. Thus $a_{kl}=1$ and $c_{kl}=0$. Since $A$ and $C$ have the same row sum, let $t>l$ be the first position where $a_{kt}=0$ and $c_{kt}=1$. Consider the submatrices of $A$ and $C$ in the region determined by rows $k+1,\ldots,m$ and columns $l+1,\ldots,t$. Note that column $t$ of $A$ has a 1 in this region, since $A$ and $C$ agree in column $t$ above row $k$ and so both have only 0's in columns $t$ above row $k$. Thus $A$ has a 1 in this region. Consider the uppermost 1, say it is in position $(k',l')$.  Let $A'=[a'_{ij}]$ be obtained from $A$ by interchanging columns $l$ and $l'$, so, as each column has exactly one 1, this is the  inverse of a Bruhat interchange. Therefore $A \le_B A'$ and $A' \not = A$. We prove that $\Sigma(A') \ge \Sigma(C)$. Note that 
\[
   \Sigma(A')_{ij}= \left\{ \begin{array}{ll}
                                        \Sigma(A)_{ij}-1 & \mbox{\rm for $k \le i <k', \,l \le j <l'$} \\
                                        \Sigma(A)_{ij} & \mbox{\rm otherwise}.
                                     \end{array}   
                             \right.           
\]
So, if $k \le i <k', \,l \le j <l'$, then 
$\Sigma(A)_{ij}= \Sigma(A)_{il'}=0$ and  
\[
    \Sigma(A')_{ij}=\Sigma(A)_{ij}-1= \Sigma(A)_{il'}-1\ge \Sigma(C)_{il'}-1\ge \Sigma(C)_{ij}.
\]
So $\Sigma(A') \ge \Sigma(C)$, and $A \le_B A' \le_B C$, as desired. Also, $A'$ and $C$ agree in one more position in row $k$, namely, $(k,l)$. The desired result now follows by  induction.
\end{proof}

\begin{theorem}
	\label{thm:Bruhat-S} 
	Consider the partially ordered set $(\mc{S}^+_{m,n},\le_B)$, and let $A, C \in \mc{S}^+_{m,n}$. Then 
	$A \le_B C$ if and only if $A$ can be obtained from $C$ by a sequence of operations of the form
	
	$(i)$ a Bruhat interchange, or 
	
	$(ii)$ replacing a zero column by a coordinate vector, or 
	
	$(iii)$ replacing a column equal to  $e_k$ by  $e_i$ where $i<k$, or
	
	$(iv)$ interchanging a nonzero column $j$ with a zero column $k$, where $k<j$.
	
\end{theorem}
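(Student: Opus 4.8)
The statement to prove is an "if and only if" characterizing the Bruhat order on $\mc{S}^+_{m,n}$ in terms of four combinatorial operations. Since the matrices in $\mc{S}^+_{m,n}$ are exactly the $(0,1)$-matrices with at most one $1$ per column (incidence matrices of ordered partitions), I would first handle the \emph{easy direction}: each of the four operations (i)--(iv) weakly increases the sum-matrix entrywise, hence takes $C$ to a matrix $A$ with $A \le_B C$. For (i) this is the classical observation that a Bruhat interchange raises $\Sigma$; for (ii), creating a $1$ where there was none only adds to later leading sums; for (iii), moving the lone $1$ in a column from row $k$ up to row $i<k$ increases the relevant leading partial column sums; for (iv), sliding a nonzero column leftward past a zero column again only increases leading sums. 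So I would verify $\Sigma(A) \ge \Sigma(C)$ operation-by-operation, giving the ``if'' direction with little more than bookkeeping on $\Sigma$.

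\textbf{The hard direction.} The substance is the converse: assuming $A \le_B C$, i.e.\ $\Sigma(A) \ge \Sigma(C)$, produce a sequence of operations (i)--(iv) carrying $C$ to $A$. The natural strategy is to \emph{reduce to the equal-row-sum, surjective case already handled by Lemma~\ref{lem:Bruhat-equal}}. The obstacle is that matrices in $\mc{S}^+_{m,n}$ need not have all column sums equal to $1$ (zero columns are allowed), nor equal row sums, whereas Lemma~\ref{lem:Bruhat-equal} applies only when $S = (1,1,\ldots,1)$ and $\mc{A}(R,S)$ is fixed with common $R$. Operations (ii)--(iv) are precisely the extra moves that bridge this gap: (ii) fills zero columns, (iii) adjusts which row a column's $1$ sits in, and (iv) repositions zero columns. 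The plan is to use (ii)--(iv) to transport $C$ into a matrix with the \emph{same} number of nonzero columns in the \emph{same} positions and the \emph{same} row sum vector as $A$, and then invoke Lemma~\ref{lem:Bruhat-equal} (applied to the submatrix on the nonzero columns, with full column sums all equal to $1$) to finish with Bruhat interchanges.

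\textbf{Key steps in order.} First I would record how each operation changes $\Sigma$, proving the ``if'' direction. Second, for the converse I would normalize: given $A \le_B C$, I would use operation (iv) to push all zero columns of $C$ to agree positionally with the zero columns of $A$ as far as the inequality $\Sigma(A)\ge\Sigma(C)$ permits, then use (ii) to create any $1$'s in columns where $A$ has a $1$ but $C$ is zero, and (iii) to raise the row index of a column's $1$ wherever $A$'s $1$ is higher than $C$'s. The inequality $\Sigma(A)\ge \Sigma(C)$ must be shown to guarantee that each required move is legal and preserves the inequality; this monotonicity check, analogous to the $\Sigma(A')\ge\Sigma(C)$ computation inside the proof of Lemma~\ref{lem:Bruhat-equal}, is where the real care is needed. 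Third, once $C$ has been transformed into a matrix $C'$ with the same support of nonzero columns and the same row sums as $A$, both $A$ and $C'$ restrict to matrices in some $\mc{A}(R,(1,\ldots,1))$ on the nonzero columns, with $\Sigma(A)\ge\Sigma(C')$ still holding, and Lemma~\ref{lem:Bruhat-equal} supplies the concluding Bruhat interchanges.

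\textbf{Main obstacle.} The delicate point is the second step: arguing that the reduction moves (ii)--(iv) can always be performed while maintaining $\Sigma(A)\ge\Sigma(C)$, and that after the reduction the two matrices genuinely land in a common class to which Lemma~\ref{lem:Bruhat-equal} applies. I expect the cleanest route is an induction, as in Lemma~\ref{lem:Bruhat-equal}: locate the first position (in row-major order) where $A$ and $C$ disagree and choose the unique operation among (ii)--(iv) that corrects that disagreement in the direction dictated by $\Sigma(A)\ge\Sigma(C)$, verify the inequality is preserved, and descend. Matching the ``first disagreement'' to exactly one of the three column-operations, and confirming the monotonicity of $\Sigma$ under it, is the crux of the argument.
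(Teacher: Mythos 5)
Your overall strategy---reduce to Lemma~\ref{lem:Bruhat-equal}---is the same as the paper's, but your reduction mechanism is different, and the part you defer (``where the real care is needed'', ``the crux of the argument'') is precisely the content of the proof, so as written there is a genuine gap. You propose to first apply (ii)--(iv) to $C$ to produce a $C'$ with the same row sums and the same zero-column positions as $A$ while maintaining $\Sigma(C')\le \Sigma(A)$, and only then apply Bruhat interchanges. Nothing in the proposal establishes that such a schedule of column operations exists. The difficulty is real: each application of (ii) or (iii) requires choosing a target row, and a wrong choice overshoots $\Sigma(A)$ (e.g.\ for $A=\left[\begin{smallmatrix}1&0\\0&1\end{smallmatrix}\right]$, $C=\left[\begin{smallmatrix}0&1\\1&0\end{smallmatrix}\right]$ the ``first disagreement'' is at $(1,1)$ and the column operation $e_2\to e_1$ on column $1$ gives $\Sigma=\left[\begin{smallmatrix}1&2\\1&2\end{smallmatrix}\right]\not\le\Sigma(A)$; the correct move is an interchange). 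So your proposed induction---``locate the first disagreement and apply the unique correcting operation among (ii)--(iv)''---is not sound as stated: the first disagreement may require operation (i), and even when a column operation is appropriate, you must prove that the dominance of the partial row sums of $A$ over those of $C$ (which is what $\Sigma(A)\ge\Sigma(C)$ gives in the last column of $\Sigma$) always permits a choice that stays below $\Sigma(A)$. You would also need to justify the implicit claim that the operations can always be ordered with all of (ii)--(iv) first and all interchanges last; this is an extra assertion your route depends on and the ``if'' direction does not supply it.

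The paper avoids all of this by augmenting rather than normalizing: it appends a phantom row of $0$'s and $1$'s so that every column sum becomes $1$ (a zero column of the original becomes a column whose $1$ sits in row $m+1$), then appends $t$ phantom columns, each with a single $1$, to equalize the row sum vectors of the two augmented matrices. Both augmented matrices then lie in a single class $\mathcal{A}(R,(1,\dots,1))$ with the sum-matrix inequality preserved, Lemma~\ref{lem:Bruhat-equal} is applied once, and each resulting interchange is classified as one of (i)--(iv) according to whether its $2\times 2$ submatrix meets the phantom row and/or the phantom columns. If you want to salvage your normalize-first plan, the cleanest fix is essentially to prove a majorization/flow lemma for the row-sum redistribution; otherwise I recommend adopting the augmentation, which makes the scheduling problem disappear.
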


\begin{proof}
We first extend $A$ and $A'$ and $C$ to $C'$  by appending a new row of $0$'s and $1$'s so that all column sums of $A'$ and $C'$ are now equal to 1. Let the row sums of $A'$ be
$p_1, p_2, \ldots, p_{m+1}$ and the row sums of $C'$ be $q_1, q_2, \ldots, q_{m+1}$. Since $A'$ and $C'$ have exactly one 1 in each column, we have
\[
	p_1+p_2+\cdots +p_{m+1}=q_1+q_2+\cdots +q_{m+1}\]
\[\	(p_1-q_1)+(p_2-q_2)+\cdots+(p_{m+1}-q_{m+1})=0\]
\[	\sum_{i=1}^{m+1}(p_i-q_i)^+=\sum_{i=1}^{m+1}(q_i-p_i)^+.\]
Let this common value in the last equation be $t$. We extend $A'$ and $C'$ to $(m+1)\times (n+t)$ $(0,1)$-matrices by including columns with exactly one 1 so that the resulting matrices $A''$ and $C''$
have the same row sum vector $R$.  In each case we use the earliest column as we go down the rows. Thus $A''$ and $C''$ belong to the class ${\mathcal A}(R,S)$ where $S$ is a vector of all 1's. Moreover, as $\Sigma (A)\ge \Sigma(C)$, it follows that  $\Sigma(A'')\ge \Sigma(C'')$. In fact, 
$\Sigma (A)_{m+1,j}= \Sigma(C)_{m+1,j}=j$ for each $j$. Also, by the construction, 
\[
   \Sigma (A'')_{ij} \ge \Sigma(C'')_{ij} \;\;(i\le m+1, \,n\le j \le n+t).
\]
Thus  by Lemma \ref{lem:Bruhat-equal}, $A''$ can be transformed to $C''$ by a sequence of inverse Bruhat interchanges.
There are four types of interchanges depending on where the corresponding $2\times 2$ matrix lies in
\[\left[\begin{array}{c|c}
	X_1&X_2\\ \hline
	X_3&X_4\end{array}\right].\]
	Here $X_3$ and $X_4$ have only one row. The relation between the position of the interchange and the type of operation in the theorem is now as follows: 
	\begin{itemize}
		\item Wholly in $X_1$, and so (i).
		\item In $X_1,X_2,X_3,X_4$, and so (ii).
			\item In $X_1$ and $X_2$, and so (iii).
		\item In $X_1$ and $X_3$ and so (iv).
	\end{itemize}
Hence if $\Sigma(A)\ge \Sigma(C)$, we can get from $A$ to $C$ by a sequence of inverse of the operations (i), (ii), (iii), and (iv).
	\end{proof}

We remark that the proof of Theorem \ref{thm:Bruhat-S}  actually contains an efficient algorithm which, for given matrices $A, C \in \mc{S}^+_{m,n}$ with   $A \le_B C$, constructs matrices $K^{(i)}$ with 
\[
    A = K^{(p)} \le _B  K^{(p-1)} \le _B \cdots \le_B K^{(1)} =C
\]
such that $K^{(i+1)}$ is constructed from $K^{(i)}$ ($1\le i <p$) by one of the four operations given in the theorem.

\medskip

\begin{corollary}
 \label{conj:Bruhat-S}
  Consider the partially ordered set $(\mc{S}^+_{m,n},\le_B)$, and let $A, C \in \mc{S}^+_{m,n}$. Then $A$ covers $C$ in $(\mc{S}^+_{m,n},\le_B)$
 if and only if $C$ can be obtained from $A$ by one of operations of the form
\begin{itemize}
 \item[\rm (i)] a Bruhat interchange $\left[\begin{array}{cc}0&1\\1&0\end{array}\right]\rightarrow
  \left[\begin{array}{cc}1&0\\0&1\end{array}\right]$ within consecutive rows and consecutive columns.
  \item[\rm (ii)] 
   replacing a zero column $n$ with a column with exactly one $1$ where this $1$ is in the last position  or, more generally, replacing column $n$ which has a $1$ in row $j$ with a column which has a $1$ in row $(j-1)$,
  \item[\rm (iii)] interchanging column $j$ with columns $(j-1)$ where the $1$ in column $j$ is in the last position and column $(j-1)$ is a zero column.
  \end{itemize}
\end{corollary}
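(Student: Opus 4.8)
The plan is to read off the covering relations from the global comparison established in Theorem~\ref{thm:Bruhat-S}. Recall that for $A,C\in\mc{S}^+_{m,n}$ one has $C\le_B A$ iff $\Sigma(C)\ge\Sigma(A)$, and that each of the four operations (i)--(iv) of Theorem~\ref{thm:Bruhat-S}, applied to the larger matrix, strictly raises the sum-matrix in at least one entry and hence strictly lowers the matrix in the Bruhat order. Thus, if $A$ covers $C$, take a chain $A=K^{(1)}>_B\cdots>_B K^{(p)}=C$ produced by Theorem~\ref{thm:Bruhat-S}; since no element may lie strictly between $A$ and $C$, the chain must have $p=2$, so $C$ is obtained from $A$ by a \emph{single} operation. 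The whole problem then reduces to deciding \emph{which} single operations are coverings and matching the answer to the three forms (i)--(iii); note that (i)--(iii) are exactly the ``minimal'' specializations of the operations of Theorem~\ref{thm:Bruhat-S} (a one-cell interchange; a one-row lowering of the last column; a swap with the adjacent zero column).

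For the ``if'' direction I would just compute the effect of each listed operation on $\Sigma$. In each case the increment $\Sigma(C)-\Sigma(A)$ is supported on a single entry, where it equals $1$: a consecutive interchange $\left[\begin{smallmatrix}0&1\\1&0\end{smallmatrix}\right]\to\left[\begin{smallmatrix}1&0\\0&1\end{smallmatrix}\right]$ on rows $i,i+1$ and columns $j,j+1$ raises only $\sigma_{ij}$; lowering the $1$ of the last column from row $j$ to row $j-1$ raises only $\sigma_{j-1,n}$; and moving the last-row $1$ of column $j$ into the adjacent zero column $j-1$ raises only $\sigma_{m,j-1}$. Since every matrix of $\mc{S}^+_{m,n}$ has an integral sum-matrix and two integral sum-matrices differing by $1$ in a single entry admit nothing integral strictly between them, Lemma~\ref{lem:Bruhat-sum-char} leaves no room for a $D\in\mc{S}^+_{m,n}$ with $\Sigma(A)\lneq\Sigma(D)\lneq\Sigma(C)$. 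Hence each of (i)--(iii) yields a covering.

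For the ``only if'' direction I would pass, as in the proof of Theorem~\ref{thm:Bruhat-S}, to the saturated extensions $A'',C''\in\mc{A}(R,S)$ with $S$ the all-ones vector, where every column carries exactly one $1$ and where, by Lemma~\ref{lem:Bruhat-equal}, the single operation relating $A$ and $C$ becomes a single Bruhat interchange. Such an interchange is a covering in $\mc{A}(R,S)$ precisely when it is \emph{minimal}, i.e.\ no occupied row lies strictly between its two rows within its two columns (and symmetrically for columns); this is exactly what ``consecutive'' encodes once the intervening lines are forced to be empty in the saturated extension. Reading this minimality back through the block partition $\left[\begin{smallmatrix}X_1&X_2\\X_3&X_4\end{smallmatrix}\right]$ of Theorem~\ref{thm:Bruhat-S} (with $X_3,X_4$ the appended last row) produces exactly the three locations: wholly inside $X_1$, giving the consecutive interchange (i); straddling the appended row through $X_2$, giving the last-column move (ii); and through $X_3$, giving the adjacent zero-column swap (iii).

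The step I expect to be the main obstacle is precisely this last matching. One must prove that a single operation is a covering \emph{if and only if} the underlying interchange is minimal, and that minimality translates cleanly into the three described forms and no others. The two delicate points are: (a) showing that a non-minimal single operation is never a covering, which I would do by exhibiting an explicit intermediate $D\in\mc{S}^+_{m,n}$ from the sum-matrix description of Lemma~\ref{lem:Bruhat-sum-char} (choosing $\Sigma(D)$ to drop exactly one occupied line inside the interchange rectangle and checking the recovered matrix stays nonnegative with at most one $1$ per column); and (b) controlling the appended rows and columns so that the cover condition is neither created nor destroyed in passing between $\mc{S}^+_{m,n}$ and $\mc{A}(R,S)$, so that the minimal interchanges in the extension correspond bijectively to the operations (i)--(iii) back in $\mc{S}^+_{m,n}$.
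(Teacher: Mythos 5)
Your reduction of a cover to a \emph{single} operation of Theorem~\ref{thm:Bruhat-S}, and your ``if'' direction, are sound and essentially the paper's own (one-line) argument: each of (i)--(iii) raises exactly one entry of the sum-matrix by $1$, and no integral sum-matrix fits strictly between two that differ by $1$ in a single cell. The genuine gap sits exactly where you flagged it, and it does not close: you need that a covering single operation is \emph{minimal} and that minimality translates into the consecutive/last-column forms (i)--(iii). The second half of that claim is false. A Bruhat interchange on rows $i<i'$ and columns $j<j'$ can be a cover whenever no intervening row has its $1$ inside the column range; this does not force $i'=i+1$, because the intervening rows may carry their $1$'s elsewhere. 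Concretely, take $m=n=3$ and
\[
A=\left[\begin{smallmatrix}0&1&0\\0&0&1\\1&0&0\end{smallmatrix}\right],\qquad
C=\left[\begin{smallmatrix}1&0&0\\0&0&1\\0&1&0\end{smallmatrix}\right].
\]
Then $C$ is obtained from $A$ by an interchange on rows $1,3$ and columns $1,2$, so $\Sigma(C)-\Sigma(A)$ is the indicator of $\{(1,1),(2,1)\}$ and the total increase is $2$; yet the only two integral matrices $T$ with $\Sigma(A)\le T\le\Sigma(C)$ besides $\Sigma(A)$ and $\Sigma(C)$ decode (via $a_{ij}=t_{ij}-t_{i-1,j}-t_{i,j-1}+t_{i-1,j-1}$) to matrices containing a $-1$. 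Hence $A$ covers $C$ in $(\mc{S}^+_{3,3},\le_B)$ although $C$ is not obtained from $A$ by any of (i)--(iii). The paper's own Figure~\ref{fig:one} already exhibits the same phenomenon: $(c)$ covers $(g)$ and $(d)$ covers $(b)$ in $(\mc{S}^+_{2,2},\le_B)$, and those covers insert or raise a $1$ in column $1$, not column $n$, again with a total $\Sigma$-increase of $2$.

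So your step (a) --- ``a non-minimal single operation is never a covering'' with minimality read as consecutiveness --- would fail, and step (b) cannot rescue it: in the saturated extension the intervening rows are not empty, so a minimal interchange there need not be consecutive, and covers in $\mc{S}^+_{m,n}$ need not even have total $\Sigma$-increment $1$. You have correctly located the crux of the argument; be aware that the paper's own proof (``these are the operations which increase the sum of the entries of $\Sigma(C)$ by exactly~$1$'') silently assumes the same false equivalence, so the characterization of covers needs to be restated with the correct minimality conditions (no obstructing $1$ in the intervening rows/columns) rather than consecutiveness before either proof can be completed.
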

\begin{proof} This  follows from Theorem \ref{thm:Bruhat-S} as these are the operations in that theorem which increase the sum of the entries of $\Sigma(C)$ by exactly 1.
\end{proof}

 \section{Polytope and decomposition}
 \label{sec:polytope}

 In \cite{SS} the {\em sign matrix polytope} ${\mathcal P}_{m,n}$ is defined as the convex hull of the matrices in ${\mathcal S}_{m,n}$ (the SRMs of size $m \times n$). It is stated in \cite{SS} that ``all $mn$ entries contribute to the dimension'' and  thus that the dimension of ${\mathcal P}_{m,n}$ is $mn$ for $m>1$. In fact,  every $m\times n$ $(0,1)$-matrix $E_{ij}$ with all 0's except for a   1 in position $(i,j)$ is in ${\mathcal S}_{m,n}$  and so these $mn$ matrices are linearly independent. Thus ${\mathcal P}_{m,n}$ contains the standard simplex in $M_{m,n}$. In \cite{SS} the following theorem is proved:
 \begin{theorem}\label{th:extreme}
 The set of extreme points of ${\mathcal P}_{m,n}$ is ${\mathcal S}_{m,n}$.
 \end{theorem}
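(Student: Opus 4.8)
The plan is to show that $\mathcal{P}_{m,n}$, the convex hull of $\mathcal{S}_{m,n}$, has its extreme points exactly equal to the members of $\mathcal{S}_{m,n}$. Since $\mathcal{P}_{m,n} = \conv(\mathcal{S}_{m,n})$ by definition, every extreme point is automatically one of the matrices in $\mathcal{S}_{m,n}$; the whole content is the reverse inclusion, namely that each SRM $A$ is in fact extreme, i.e.\ cannot be written as a proper convex combination of other SRMs. First I would translate the SRM constraints into a system of linear inequalities and equalities through the sum-matrix $\Sigma(A)=[\sigma_{ij}]$. Recalling that the map $A \mapsto \Sigma(A)$ is a linear isomorphism, the defining conditions of an SRM become the following constraints on the entries $\sigma_{ij}$: each partial column sum $\sigma_{ij}-\sigma_{i,j-1}$ lies in $\{0,1\}$, and each partial row sum $\sigma_{ij}-\sigma_{i-1,j}$ is nonnegative (with the convention $\sigma_{0j}=\sigma_{i0}=0$). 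These give the linear description $0 \le \sigma_{ij}-\sigma_{i,j-1} \le 1$ and $\sigma_{ij}-\sigma_{i-1,j}\ge 0$, reminiscent of the characterization in Lemma~\ref{lem:Bruhat-sum-char}.

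The key step is to verify that this polyhedron of sum-matrices is integral, which would make its vertices the integer points and hence exactly the sum-matrices of SRMs. The cleanest route I would take is a total-unimodularity argument analogous to the one used in the proof of Theorem~\ref{thm:plus-minus-conv}. Working in the coordinates $\sigma_{ij}$ rather than $a_{ij}$, the constraint matrix of the system $0 \le \sigma_{ij}-\sigma_{i,j-1} \le 1$, $\sigma_{ij}-\sigma_{i-1,j}\ge 0$ is a difference (network-type) matrix: each row has at most one $+1$ and at most one $-1$. Such interval/network matrices are totally unimodular, so every vertex of the polyhedron they define is integral. Pulling this back through the linear isomorphism $\Sigma^{-1}$, which preserves integrality and affine structure, the vertices of $\mathcal{P}_{m,n}$ are precisely the integral points satisfying the constraints, and these integral points are exactly the sum-matrices of $\mathcal{S}_{m,n}$.

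Alternatively, and perhaps more in the elementary spirit of the paper, I would argue extremality directly. Suppose $A = \tfrac12(B+C)$ with $B,C \in \mathcal{P}_{m,n}$, equivalently $\Sigma(A)=\tfrac12(\Sigma(B)+\Sigma(C))$ where $\Sigma(B),\Sigma(C)$ lie in the polyhedron above. For each entry, $\sigma_{ij}(A)-\sigma_{i,j-1}(A)\in\{0,1\}$ is a convex combination of two numbers in $[0,1]$; when this value is $0$ or $1$ (an endpoint), both combined values must equal it. I would run this ``tightness forces equality'' observation entry by entry: every partial column sum of $A$ equals $0$ or $1$, an extreme value of the interval $[0,1]$, forcing the corresponding differences of $\Sigma(B)$ and $\Sigma(C)$ to agree with those of $\Sigma(A)$. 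Combined with the boundary conventions $\sigma_{0j}=\sigma_{i0}=0$, propagating these equalities across all entries yields $\Sigma(B)=\Sigma(C)=\Sigma(A)$, hence $B=C=A$, so $A$ is extreme.

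The main obstacle, and the step requiring the most care, is ensuring that the tightness argument truly pins down \emph{all} coordinates: the partial-column-sum constraints are box constraints hitting an endpoint (so they directly force equality), but the partial-row-sum constraints are only one-sided inequalities ($\ge 0$), and these do not by themselves force equality when the value is strictly positive. I would therefore need to confirm that the integrality of $A$ together with the two-sided column constraints already determines every $\sigma_{ij}(A)$ as an extreme value of the relevant feasible interval. Concretely, I expect to reconstruct $\sigma_{ij}$ column by column: within each column the alternating $\pm 1$ pattern means the partial column sums form a $0/1$ sequence, every entry of which is at an endpoint of its box, so the column argument alone suffices to recover the full matrix once one observes that knowing all single-entry column increments determines $A$ entrywise via $a_{ij}=\sigma_{ij}-\sigma_{i-1,j}-\sigma_{i,j-1}+\sigma_{i-1,j-1}$. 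I would close the proof by noting that this forces $B=C=A$, completing the reverse inclusion and establishing that the extreme points of $\mathcal{P}_{m,n}$ are exactly $\mathcal{S}_{m,n}$.
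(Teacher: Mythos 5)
Your proof is correct, and its working core is the third paragraph (the direct ``tightness forces equality'' argument): for any $X\in\mathcal{P}_{m,n}=\conv\mathcal{S}_{m,n}$ the partial column sums $\sum_{k\le i}x_{kj}$ lie in $[0,1]$ because they are convex combinations of values in $\{0,1\}$; if $A=\tfrac12(B+C)$ then each partial column sum of $A$, being $0$ or $1$, is an endpoint of that interval, so the corresponding partial sums of $B$ and $C$ must agree with it, and differencing in $i$ recovers every entry, giving $B=C=A$. The worry you raise about the one-sided row constraints is correctly dispatched: the column increments alone determine the matrix. The paper instead proves a one-column lemma (each nonzero alternating $(0,\pm1)$-vector with leading $+1$ is an extreme point of the convex hull of all such vectors) by pinning individual entries: a convex combination equal to $1$ at a position forces all summands to equal $1$ there, and then a separate argument handles the zero gap between a $1$ and the following $-1$, with a case split for unit vectors. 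Your partial-sum formulation is a genuine streamlining of the same underlying endpoint principle: it treats zero entries and zero columns uniformly and avoids the paper's case analysis and its slightly delicate ``gap'' step. One caution on your first route (total unimodularity): integrality of the polyhedron only shows that its vertices are integral, not that every integral point is a vertex, so as written that route does not yet yield the hard inclusion. It can be repaired---at an integral point all $mn$ two-sided constraints $0\le\sigma_{ij}-\sigma_{i,j-1}\le1$ are tight, and these functionals form a linear isomorphism of $M_{m,n}$, so the point is a vertex---but that repair is essentially your direct argument in disguise, so the elementary route is the one to keep.
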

 This theorem admits a simple proof based on the proof in \cite{BD1} that the extreme points of the convex hull of the $n\times n$ ASMs are precisely the $n\times n$ ASMs.  We formulate the following lemma which is essentially the proof  given in \cite{BD1}.
 
 \begin{lemma}\label{lem:BD1}
 Let $\mc{X}_n$ be the set of nonzero vectors
 $x=(x_1,x_2,\ldots,x_n)$  in which, ignoring $0$'s,  the $1$'s and $-1$'s alternate, and the first nonzero  is $1$. Then each vector in $\mc{X}_n$ is an extreme point of the convex hull of $\mc{X}_n$.
 \end{lemma}
 \begin{proof}
 Suppose that $x \in \mc{X}_n$ and 
 \[
 x=\lambda_1 x^{(1)} + \lambda_2 x^{(2)}+\cdots +\lambda_k x^{(k)}
 \]
 where $x^{(i)} \in \mc{X}_n$, $\lambda_i>0$ $(1\le i\le k)$ and $\sum_{i=1}^k\lambda_i=1$.
 If $x$ is a unit vector then, since the $\pm 1$'s in the $x^{(i)}$ alternate, then $x^{(j)}=x$ for some $j$ with $\lambda_j=1$. Now suppose that $x$ is not a unit vector and that $k\ge 2$. Then $x$ contains both a 1 and a $-1$, so  there exists $p$ and $q$ such that $p+1< q$ and $x_p=1$, $x_{p+1}=\cdots=x_{q-1}=0, x_q=-1$. It follows that all $x^{(i)}$ have a 1 in position $p$ and a $-1$ in position $q$. But then all $x^{(i)}$ 
 have either $0$ or $-1$ in position $p+1$ with at least one $-1$ and hence position $p+1$ of $\lambda_1 x^{(1)} + \lambda_2 x^{(2)}+\cdots +\lambda_k x^{(k)}$ does not equal 0, a contradiction.
 \end{proof}
 
 Theorem \ref{th:extreme} follows immediately from Lemma \ref{lem:BD1} by considering any nonzero column of a matrix in ${\mathcal S}_{m,n}$.
 
 In  \cite{BD1} the following notion was introduced. 
 Let $B=[b_{ij}]$ be an $n \times n$ nonnegative matrix. An $n \times n$ matrix $A=[a_{ij}]$ is {\em sum-majorized} by $B$ if  
 \begin{equation}
      \label{eq:B-maj_0}      
      \begin{array}{rl}
       0 \le \sum_{j'=1}^j a_{ij'} \le b_{ij}   &(1 \le i,j \le n), \\*[1.5\smallskipamount]
       0 \le   \sum_{i'=1}^i a_{i'j}\le b_{ij}   &(1 \le i,j \le n),\\*[1.5\smallskipamount]
            \sum_{j=1}^n a_{ij} = b_{in}  &(1 \le i \le n),\\*[1.5\smallskipamount]
            \sum_{i=1}^n a_{ij} = b_{nj} &(1 \le j \le n).
      \end{array}
\end{equation}
Letting $B=J$, the all ones matrix, we  see that an integral matrix $A$  is sum-majorized by $J$ if and only if $A$ is an ASM. Another special case is $B=rJ$, for some positive integer $r$, and this corresponds to the notion of higher spin ASMs that was studied in  \cite{BN}. The following polyhedral result was shown in  \cite{BD1}.

\begin{theorem} 
 \label{thm:k-ASM}
   Let $B=[b_{ij}]$ be an $n\times n$ nonnegative matrix. The convex hull of all integral matrices that are sum-majorized 
by $B$ equals the set  of real matrices $A=[a_{ij}]$ satisfying the linear system in $(\ref {eq:B-maj_0} )$.
\end{theorem}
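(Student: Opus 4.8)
The plan is to prove that the polyhedron $P$ cut out by the linear system $(\ref{eq:B-maj_0})$ is an integral polytope, i.e.\ that all of its vertices are integral matrices. Granting this, the theorem follows at once: every integral matrix sum-majorized by $B$ satisfies $(\ref{eq:B-maj_0})$ and hence lies in $P$, so their convex hull is contained in the convex set $P$; conversely, if $P$ is a polytope with integral vertices, then $P$ is the convex hull of its vertices, each of which is an integral matrix sum-majorized by $B$. Thus the only real content is integrality of the vertices.

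First I would pass to sum-matrix coordinates, exactly as in the proofs of Theorem \ref{thm:D-M-completion} and Lemma \ref{lem:Bruhat-sum-char}. The map $C \mapsto \Sigma(C)$ is a linear isomorphism of $M_n$ that carries integral matrices to integral matrices in both directions (its inverse is the finite-difference formula $a_{ij}=\sigma_{ij}-\sigma_{i-1,j}-\sigma_{i,j-1}+\sigma_{i-1,j-1}$, with integer coefficients). Writing $\sigma_{ij}$ for the entries of $\Sigma(A)$, with the convention $\sigma_{0j}=\sigma_{i0}=0$, the partial sums in $(\ref{eq:B-maj_0})$ become differences, $\sum_{j'\le j}a_{ij'}=\sigma_{ij}-\sigma_{i-1,j}$ and $\sum_{i'\le i}a_{i'j}=\sigma_{ij}-\sigma_{i,j-1}$. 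Hence $(\ref{eq:B-maj_0})$ transforms into the difference constraints $0\le \sigma_{ij}-\sigma_{i-1,j}\le b_{ij}$ and $0\le \sigma_{ij}-\sigma_{i,j-1}\le b_{ij}$, together with the equalities $\sigma_{in}-\sigma_{i-1,n}=b_{in}$ and $\sigma_{nj}-\sigma_{n,j-1}=b_{nj}$ that fix the last row and last column of $\Sigma$ to the partial sums of $B$. It therefore suffices to show that the polytope $Q$ in $\sigma$-space defined by these constraints has integral vertices, since $\Sigma^{-1}$ then transfers integrality back to $P$.

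The key step is that the coefficient matrix of $Q$ is totally unimodular. Each of the inequalities and equalities above involves either a single variable $\sigma_{ij}$ or a difference $\sigma_{ij}-\sigma_{i',j'}$ of exactly two variables with coefficients $+1$ and $-1$ (the single-variable case occurring precisely on the boundary, where $\sigma_{i-1,j}$ or $\sigma_{i,j-1}$ is the constant $0$). Thus every row of the coefficient matrix contains at most one $+1$ and at most one $-1$; such a matrix is, up to transposition, the incidence matrix of a directed graph, and is therefore totally unimodular. This is the same kind of total-unimodularity argument already used for Theorem \ref{thm:plus-minus-conv}. Since $B$ is integral (as in the motivating cases $B=J$ and $B=rJ$), all right-hand sides, namely the $b_{ij}$ and the partial sums $\sum_{i'\le i}b_{i'n}$ and $\sum_{j'\le j}b_{nj'}$ fixing the boundary, are integers, so every vertex of $Q$ is integral. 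Boundedness of $Q$ is immediate: the monotonicity inequalities give $0\le \sigma_{ij}$, while iterating $\sigma_{ij}\le \sigma_{i,j-1}+b_{ij}$ along a row gives $\sigma_{ij}\le \sum_{j'\le j}b_{ij'}$, so each coordinate lies in a bounded interval and $Q$ is a polytope.

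The only delicate point is the bookkeeping that confirms the ``at most one $+1$ and one $-1$'' pattern in every row, including the degenerate boundary rows where one of the two variables has been set to $0$; once this pattern is verified, total unimodularity and hence integrality of all vertices follow immediately, and the proof is complete. I expect no genuine obstacle beyond this verification; indeed the statement is quoted from \cite{BD1}, where this polyhedral fact is established.
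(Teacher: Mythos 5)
Your proof is correct and follows essentially the same route as the paper, which quotes Theorem \ref{thm:k-ASM} from \cite{BD1} and whose proof of the companion Theorem \ref{thm:c-signpolytope} is exactly your argument: pass to sum-matrix coordinates via the isomorphism $C\mapsto\Sigma(C)$, observe that the constraints become difference constraints whose coefficient matrix is the arc--vertex incidence matrix of a digraph and hence totally unimodular, and conclude integrality of all vertices. The one caveat, which you rightly flag, is that the right-hand sides are integral only when $B$ is an integral matrix --- a hypothesis the statement omits (it says only ``nonnegative'') but clearly intends, since for non-integral $B$ the asserted equality can fail.
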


Now, we connect this to SRMs, and consider the following variation of (\ref{eq:B-maj_0}) for a given $m \times n$ matrix $A=[a_{ij}]$ and a nonnegative integer $c$
 \begin{equation}
      \label{eq:B-maj}      
      \begin{array}{rl}
       0 \le \sum_{l=1}^j a_{il}    &(1 \le i,j \le n), \\*[1.5\smallskipamount]
       0 \le   \sum_{k=1}^i a_{kj}\le 1   &(1 \le i,j \le n),\\*[1.5\smallskipamount]
        \sum_{j=1}^n a_{ij} \le  c  &(1 \le i \le n).
      \end{array}
\end{equation}
We call an integral matrix $A$ satisfying (\ref{eq:B-maj}) a {\em $c$-SRM}. Such a matrix must be a $(0, \pm 1)$-matrix with its nonzeros alternating in every column. When $c \ge n$, a $c$-SRM is precisely an SRM (since an SRM has each row sum at most $n$, and then the third set of constrains in  (\ref{eq:B-maj}) are redundant). 
In general, the parameter $c$ bounds the row sums of the matrix. Let $\mc{S}^c_{m,n}$ denote the class of $c$-SRMs of size $m \times n$, and let the {\em $c$-SRM polytope} $\mc{P}^c_{m,n}$ be defined as the convex hull of the matrices in $\mc{S}^c_{m,n}$. So, when $c \ge n$, we have 
 $\mc{S}^c_{m,n}=\mc{S}_{m,n}$ and $\mc{P}^c_{m,n}=\mc{P}_{m,n}$.
 
 The following result generalizes the linear inequality description of $\mc{P}_{m,n}$  given in  \cite{SS}, and the proof is different and short.
 
\begin{theorem} 
 \label{thm:c-signpolytope}
   Let $c$ be a nonnegative integer. Then  the  polytope $\mc{P}^c_{m,n}$ is equal to the set of  real matrices $A=[a_{ij}]$ satisfying $(\ref{eq:B-maj})$.
\end{theorem}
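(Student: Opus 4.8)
The plan is to prove the two inclusions separately, the substantive content being a total‑unimodularity argument showing that every vertex of the polyhedron defined by $(\ref{eq:B-maj})$ is integral and hence a $c$‑SRM. Let $P$ denote the set of real $m \times n$ matrices satisfying $(\ref{eq:B-maj})$. One inclusion is immediate: every $c$‑SRM satisfies $(\ref{eq:B-maj})$ by definition, so $\mc{S}^c_{m,n} \subseteq P$, and since $P$ is convex, $\mc{P}^c_{m,n} = \conv(\mc{S}^c_{m,n}) \subseteq P$. For the reverse inclusion it suffices to show that $P$ is a bounded polyhedron all of whose vertices lie in $\mc{S}^c_{m,n}$; then $P = \conv(\text{vertices of } P) \subseteq \mc{P}^c_{m,n}$.

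First I would pass to the sum‑matrix coordinates. Recall from Section \ref{sec:Bruhat} that $f : M_{m,n} \to M_{m,n}$, $f(A) = \Sigma(A)$, is a linear isomorphism whose inverse sends integral matrices to integral matrices. Writing $\sigma_{ij}$ for the entries of $\Sigma(A)$ with the convention $\sigma_{0j} = \sigma_{i0} = 0$, the identities $\sum_{k=1}^i a_{kj} = \sigma_{ij} - \sigma_{i,j-1}$, $\sum_{l=1}^j a_{il} = \sigma_{ij} - \sigma_{i-1,j}$, and $\sum_{j=1}^n a_{ij} = \sigma_{in} - \sigma_{i-1,n}$ translate $(\ref{eq:B-maj})$ into the difference constraints $0 \le \sigma_{ij} - \sigma_{i,j-1} \le 1$, $\sigma_{ij} - \sigma_{i-1,j} \ge 0$, and $\sigma_{in} - \sigma_{i-1,n} \le c$ (for $i \le m$, $j \le n$). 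Thus $f$ carries $P$ onto the polyhedron $Q$ in the $\sigma$‑variables cut out by these inequalities. Telescoping the first family gives $0 \le \sigma_{ij} \le j \le n$, so $Q$, and hence $P$, is bounded, i.e.\ a polytope.

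The heart of the argument is that $Q$ has only integral vertices. After moving every inequality to the form $\sigma_a - \sigma_b \le d$ (or $\sigma_a \le d$, $-\sigma_a \le d$ at the boundary, where a pinned term $\sigma_{0j}$ or $\sigma_{i0}$ drops out), the right‑hand side $d \in \{0,1,c\}$ is an integer, and each row of the coefficient matrix $M$ has at most one entry equal to $+1$ and at most one equal to $-1$, all others being $0$. Such a matrix is (the transpose of) a vertex‑arc incidence matrix of a directed graph and is therefore totally unimodular, by the same classical fact invoked in the proof of Theorem \ref{thm:plus-minus-conv}. Since $M$ is totally unimodular and the right‑hand side is integral, every vertex of $Q$ is integral; applying $f^{-1}$, which preserves integrality, shows every vertex of $P$ is integral. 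Being integral and satisfying $(\ref{eq:B-maj})$, each such vertex is by definition a $c$‑SRM, which yields the inclusion $P \subseteq \mc{P}^c_{m,n}$ and hence equality.

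I expect the only delicate point to be the bookkeeping: verifying that passing to the sum‑matrix coordinates turns $(\ref{eq:B-maj})$ \emph{exactly} into a difference system, and handling the boundary cases $i=1$ or $j=1$, where one variable is pinned to $0$ and the corresponding row of $M$ has a single nonzero entry. A reduction straight to Theorem \ref{thm:k-ASM} of \cite{BD1} is less clean here, since our row sums are bounded by $c$ rather than fixed and only the partial column sums are capped at $1$; the difference‑constraint formulation sidesteps this. Once the constraints are in difference form, total unimodularity together with the Hoffman--Kruskal integrality criterion finishes the proof at once, which explains why this argument is short and differs from the one in \cite{SS}.
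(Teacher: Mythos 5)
Your proposal is correct and follows essentially the same route as the paper's proof: pass to sum-matrix coordinates via the isomorphism $A \mapsto \Sigma(A)$, observe that $(\ref{eq:B-maj})$ becomes a system of difference constraints whose coefficient matrix is the (transpose of the) arc-vertex incidence matrix of a digraph and hence totally unimodular, and conclude integrality of all vertices. The only addition is your explicit boundedness check via telescoping, a detail the paper leaves implicit.
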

\begin{proof}
 The proof is a slight variation of our proof of Theorem \ref{thm:k-ASM} in  \cite{BD1}. 
Let $D=(V,E)$ denote the directed graph with vertices $v_{ij}$ ($1 \le i \le m$, $1 \le j \le n$) and arcs $(v_{ij},v_{i+1,j})$ and $(v_{ij},v_{i,j+1})$ for all $i,j$ where the indices are defined. Thus, the vertices correspond to the positions of an $m \times n$ matrix, and arcs from a position go to the neighbor below or to the right. 
 Let $\mc{P}^* \subseteq M_{m,n}$ be the polyhedron consisting of all {\em real} matrices $A=[a_{ij}] \in M_{m,n}$ satisfying the linear system in $(\ref{eq:B-maj})$.  The map $T: A \rightarrow \Sigma(A)$, where $\Sigma(A)$ denotes the sum-matrix,  is an isomorphism on $M_{m,n}$, and therefore $\mc{P}^*$ and its image 
$\Sigma(\mc{P}^*)=\{\Sigma(A): A \in \mc{P}^*\}$
 are isomorphic.   Let $A=[a_{ij}]$ and $S=[s_{ij}]=T(A)$. Then $A=T^{-1}(S)$ is given by 
 \[
       a_{ij}=s_{ij}+s_{i-1,j-1}-s_{i-1,j}-s_{i,j-1} \;\;\;(1 \le i,j \le n)
 \]
where we define $s_{0j}=0$ ($1 \le j \le n$) and $s_{i0}=0$ ($1 \le i \le n$). Moreover 
 \begin{equation}
  \label{eq:A-S-rel}
   \begin{array}{cl}
    \sum_{l=1}^j a_{il}=s_{ij}-s_{i-1,j} &(1\le i,j \le n), \\*[1.5\smallskipamount]
    \sum_{k=1}^i a_{kj}=s_{ij}-s_{i,j-1} &(1\le i,j \le n), \\*[1.5\smallskipamount]
   \sum_{1\le k\le i, 1\le j\le n} a_{kj} = s_{in}  &(1 \le i \le n), \\*[1.5\smallskipamount]
   \sum_{1 \le i\le n, 1 \le l\le j} a_{il} = s_{nj}  &(1 \le j \le n).  
  \end{array}
 \end{equation}
  Note that this isomorphism $T$ and its inverse $T^{-1}$ preserve integrality, that is, an integral matrix is mapped by $T$ and $T^{-1}$ into an integral matrix.

 Now, we claim that  $\Sigma(\mc{P}^*)$ is the set of matrices $S=[s_{ij}]$ satisfying 
 \begin{equation}
      \label{eq:k-ASM-S}      
      \begin{array}{cl}
       0 \le s_{ij}-s_{i-1,j}    &(1 \le i, j \le n), \\*[1.5\smallskipamount]
       0 \le s_{ij}-s_{i,j-1} \le 1    &(1 \le i, j \le n),\\*[1.5\smallskipamount]
           s_{in}-s_{i-1,n}  \le c  &(1 \le i \le n).
      \end{array}
\end{equation}
If fact, if $A$ satisfies (\ref{eq:B-maj}), then, due to (\ref{eq:A-S-rel}),  $S=\Sigma(A)$ satisfies (\ref{eq:k-ASM-S}). Conversely, assume $S=[s_{ij}]$ satisfies (\ref{eq:k-ASM-S}) and let $A=T^{-1}(S)$. Then $A=[a_{ij}]$ satisfies $T(A)=S$, so due to (\ref{eq:A-S-rel}), $A$ satisfies (\ref{eq:B-maj}), as claimed.

 The coefficient matrix of the linear system in (\ref{eq:k-ASM-S}) is totally unimodular, in fact, it is arc-vertex incidence matrix of the directed graph $D$ introduced above (with some repeated arcs/columns). Moreover, all the constants in the system  are integers as $c$ is integral. A standard result from polyhedral theory (see \cite{Schrijver1986}) then implies that $\Sigma(\mc{P}^*)$ is an integral polyhedron, so all extreme points are integral. From the properties of the isomorphism, $\mc{P}^*$ is integral, and this shows the theorem.
\end{proof}


\medskip

 \begin{theorem}\label{th:decomp}
Let $A$ be an $m\times n$ SRM. Then there exist disjoint subpermutation matrices $P_1, P_2,\ldots,P_N$ and
$\lambda_i\in \{1,-1\}$ $(1\le i\le N)$
such that
\[A=\lambda_1P_1+\lambda_2P_2+\cdots+\lambda_NP_N.\]
\end{theorem}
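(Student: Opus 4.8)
The plan is to reduce the statement to a classical decomposition of $(0,1)$-matrices and then reassemble with signs. Write $A=A^+-A^-$, where $A^+=[a^+_{ij}]$ is the $(0,1)$-matrix with $a^+_{ij}=1$ exactly when $a_{ij}=1$, and $A^-=[a^-_{ij}]$ is the $(0,1)$-matrix with $a^-_{ij}=1$ exactly when $a_{ij}=-1$. Since no entry of $A$ is simultaneously $+1$ and $-1$, the supports of $A^+$ and $A^-$ are disjoint. A subpermutation matrix is a $(0,1)$-matrix with at most one $1$ in each row and each column, so $\lambda P$ with $\lambda\in\{1,-1\}$ has all of its nonzero entries equal to $\lambda$; hence it suffices to decompose each of $A^+$ and $A^-$ into pairwise support-disjoint subpermutation matrices, attach the sign $+1$ to the pieces of $A^+$ and $-1$ to those of $A^-$, and take the union of the two families. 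I note that this reduction uses only that $A$ is a $(0,\pm1)$-matrix; the sign-restricted hypothesis is not actually needed, although of course it applies to every SRM.

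The one substantive ingredient is the claim that any $(0,1)$-matrix $M$ is the sum of pairwise support-disjoint subpermutation matrices. I would prove this by K\"onig's bipartite edge-coloring theorem. Form the bipartite graph $G$ whose two vertex classes index the rows and the columns of $M$, with an edge joining row $i$ to column $j$ precisely when the $(i,j)$ entry of $M$ equals $1$. A set of entries forms (the support of) a subpermutation matrix if and only if the corresponding edges form a matching in $G$. By K\"onig's theorem the edges of the bipartite graph $G$ can be partitioned into $\Delta$ matchings, where $\Delta$ is the maximum degree of $G$; equivalently, one may argue by induction, repeatedly extracting a matching that saturates every vertex of maximum degree (which exists by a defect-form Hall argument) and thereby lowering the maximum degree by one. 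Either way, the resulting matchings correspond to pairwise support-disjoint subpermutation matrices whose sum is $M$.

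Applying this to $M=A^+$ and to $M=A^-$ yields $A^+=P_1+\cdots+P_a$ and $A^-=Q_1+\cdots+Q_b$, with pairwise disjoint supports within each family. Because the supports of $A^+$ and $A^-$ are disjoint, every $P_i$ is moreover support-disjoint from every $Q_j$, so the whole family $P_1,\ldots,P_a,Q_1,\ldots,Q_b$ consists of pairwise support-disjoint subpermutation matrices. Setting $\lambda=1$ on the $P_i$ and $\lambda=-1$ on the $Q_j$ gives $A=P_1+\cdots+P_a-Q_1-\cdots-Q_b$, the asserted decomposition with $N=a+b$. The only place where any real work occurs is the edge-coloring step; everything else is bookkeeping, so I expect that to be the crux, and it is settled by K\"onig's classical theorem. (One could instead try to read a decomposition directly off the alternating column structure of the SRM, pairing in each column the $t$-th $+1$ with the $t$-th $-1$; but such pairs need not avoid repeating a row, so a matching/recoloring argument would still be required, and the route above is cleaner.)
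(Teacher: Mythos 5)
Your proof is correct, and it takes a genuinely different route from the paper's. The paper proceeds by embedding: it pads $A$ to a square matrix, then augments it to an $np\times np$ matrix $B$ with all line sums equal to $p$, adds the all-ones matrix $J_{np}$ to obtain a nonnegative integral matrix with constant line sums $p+1$, decomposes that as a sum of permutation matrices (the Birkhoff--K\"onig theorem for integral matrices with constant line sums), writes $B=(B+J_{np})-J_{np}$ as a signed sum of permutation matrices, and restricts to the block occupied by $A$. Your argument instead splits $A=A^+-A^-$ and applies K\"onig's edge-coloring theorem separately to $A^+$ and $A^-$; this is shorter, avoids the padding bookkeeping entirely, and, as you correctly note, never uses the sign-restricted hypothesis --- the conclusion holds for every $(0,\pm1)$-matrix. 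Your route also makes the disjointness conclusion transparent: the matchings partition the edge sets of the two bipartite graphs, and $\supp A^+\cap\supp A^-=\emptyset$, so all the pieces are pairwise support-disjoint; in the paper's construction, by contrast, a positive permutation piece of $B+J_{np}$ and a negative piece coming from $J_{np}$ can both be nonzero at a position where $B$ is $0$, so some cancellation has to be cleaned up before the summands are literally disjoint. Two small remarks: first, since the theorem places no bound on $N$, the literal statement is already satisfied by decomposing into single-entry matrices $E_{ij}$, so the K\"onig step is doing more than is strictly required --- it bounds $N$ by the maximum number of $+1$'s in a line plus the maximum number of $-1$'s in a line; second, what the paper's embedding buys (and yours does not) is that each subpermutation summand extends to a full permutation matrix of a fixed larger square matrix, though the theorem as stated does not ask for this.
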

\begin{proof}
As already remarked, we may assume that all column sums are equal. If $m<n$, then we can include $n-m$ zero rows on the bottom of $A$ and this keeps all column sums equal to 1.
If $m>n$, then we can include $m-n$ columns on the right of $A$ each with a single 1 and this keeps all column sums equal to 1. Thus we may also assume that $m=n$, that is, that $A$ is a square matrix with all column sums equal to 1 and hence its row sum vector $R=(r_1,r_2,\ldots,r_m)$ satisfies $\sum_{i=1}^n r_i=n$. Let $p$ be the maximum row sum of $A$. Then we attach to $A$ on the right an $n\times n(p-1)$ matrix $A_1$ with exactly one 1 in each column so that all row sums now equal $p$. (Note that the arithmetic is correct here: to get all row sums equal to $p$ we need to attach $np-\sum_{i=1}^nr_i= np-n=n(p-1)$ columns with a single 1.) We may attach on the bottom of $A$ an
$n(p-1)\times n$ matrix $A_2$ with $(p-1)$ 1's in each column and one 1 in each row in order to make each column sum equal to $p$.
Let $A_3$ be a $n(p-1)\times n(p-1)$ $(0,1)$-matrix with $(p-1)$ $1$'s in each row and column.
Then the matrix
\[B=\left[\begin{array}{c|c}
A&A_1\\ \hline
A_2&A_3\end{array}\right]\]
is an $np\times np$ $(0,\pm 1)$-matrix, whose row and column sums all equal $p$ and whose only $-1$'s are in $A$, and hence $B+J_{np}$ is a $(0,1,2)$-matrix with all row and column sums equal to $p+1$. Hence $B+J_{np}$ is a sum of $(p+1)$ permutation matrices, and since $J_{np}$ is a sum of permutation matrices, $B$ is a sum of permutation matrices and the negatives of permutation matrices. Restricting this sum to $A$ completes the proof.
\end{proof}


\section{Coda}

In this final section we discuss a connection with an unsolved problem concerning disjoint realization of $(0,1)$-matrices with specified row and column sums.

Consider the class  $ \mc{A}(R,S)$ of $m\times n$ $(0,1)$-matrices where $R=R_1+R_2$ and $S=S_1+S_2$, and  $R_1,R_2,S_1,S_2$ are nonnegative integral vectors. 
If there are matrices  $B_1\in {\mathcal A}(R_1,S_1)$ and $B_2\in {\mathcal A}(R_2,S_2)$ such that $B=B_1+B_2 $ is a matrix in  $ \mc{A}(R,S)$, then $ \mc{A}(R,S)$ has a {\it $(R_1,S_1;R_2,S_2)$ joint realization} and $(B_1,B_2)$ is a {\it joint realization} of $B$ (and of  $ \mc{A}(R,S)$); see e.g.  pages 188--190 in \cite{BR91}. For a joint realization the  matrices $B_1$ and $B_2$ cannot have 1's in common positions and we denote this by $B_1\sqcap B_2=\emptyset$. 

Let $A\in {\mathcal A}^{\pm}(R,S)$. Then $A$ can be uniquely expressed  in the form $A=A_1-A_2$ where $A_1$ and $A_2$ are $(0,1)$-matrices such that $A_1\sqcap A_2=\emptyset$. Let the row and column sum vectors of $A_1$ and $A_2$ be, respectively, $R_1,S_1$ and $R_2,S_2$, and let $R'=R_1+R_2$ and $S'=S_1+S_2$. Then $A'=A_1+A_2$ is an $(R_1,S_1;R_2,S_2)$ joint realization  of ${\mathcal A}(R',S')$ and $(A_1,A_2)$ is a joint  realization of $A'$. Thus, {\it every matrix in ${\mathcal A}^{\pm} (R,S)$ with at least one $1$ and at least one $-1$ gives some   joint realization of $\mc{A}(R,S)$, and every joint realization of $R,S$ gives a matrix in ${\mathcal A}^{\pm} (R,S)$}.

Given  $R_1,S_1$ and $R_2,S_2$ such that both ${\mathcal A}(R_1,S_1)$ and ${\mathcal A}(R_1,S_1)$ are nonempty, it is an unsolved problem to determine whether or not ${\mathcal A}(R_1+R_2,S_1+S_2)$ has an $(R_1,S_1;R_2,S_2)$ joint realization. A necessary condition is that  ${\mathcal A}(R_1+R_2,S_1+S_2)$ is nonempty, but this is not sufficient in general. The following sufficient condition is due to Anstee as a generalization of a theorem of Brualdi and Ross (see Theorem 4.4.14 of \cite{RAB}).

\begin{theorem}\label{th:joint}
Let $R=(r_1,r_2,\ldots,r_m)$ and $S=(s_1,s_2,\ldots,s_n)$ be nonnegative integral vectors.
Let $R_1=(k_1,k_2,\ldots,k_m)$ where for some nonnegative integer $k$, $k\le k_i\le k+1$ for $i=1,2,\ldots,m$,  and let $S_1=(s'_1,s'_2,\ldots,s'_n)$. Let $R_2=R-R_1$ and $S_2=S-S_1$. Then $\mc{A}(R,S)$ has an $(R_1,S_1;R_2,S_2)$ joint realization if and only if both $\mc{A}(R,S)\ne\emptyset$ and $\mc{A}(R_2,S_2)\ne \emptyset$.
\end{theorem}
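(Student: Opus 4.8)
The necessity of the two conditions is immediate: any joint realization $B=B_1+B_2$ has $B\in\mc{A}(R,S)$ and $B_2\in\mc{A}(R_2,S_2)$, so both classes are nonempty. For sufficiency I would first record a reformulation. Because $R_1+R_2=R$ and $S_1+S_2=S$, a pair $B_1\in\mc{A}(R_1,S_1)$, $B_2\in\mc{A}(R_2,S_2)$ with $B_1\sqcap B_2=\emptyset$ automatically satisfies $B_1+B_2\in\mc{A}(R,S)$, and conversely every joint realization arises this way. Hence it suffices to produce disjoint $B_1\in\mc{A}(R_1,S_1)$ and $B_2\in\mc{A}(R_2,S_2)$; equivalently, a matrix $B\in\mc{A}(R,S)$ together with a submatrix $B_1\le B$ (entrywise) lying in $\mc{A}(R_1,S_1)$, in which case $B-B_1\in\mc{A}(R_2,S_2)$ finishes the job.

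The plan is an extremal exchange argument. Starting from any $B\in\mc{A}(R,S)$ and any $C\in\mc{A}(R_2,S_2)$ (both nonempty by hypothesis), define the \emph{conflict number} $c(B,C)=|\{(i,j):C_{ij}=1,\ B_{ij}=0\}|=|\supp C\setminus\supp B|$, and choose a pair minimizing it. If $c(B,C)=0$ then $C\le B$, so $B_2:=C$ and $B_1:=B-C\in\mc{A}(R_1,S_1)$ give the desired realization; thus I must show the minimum is $0$. Suppose not and fix a conflict position $(i,j)$, so $C_{ij}=1$ and $B_{ij}=0$. Since $B$ has row sum $r_i\ge 1$ and column sum $s_j\ge 1$, row $i$ of $B$ has a $1$ off column $j$ and column $j$ of $B$ has a $1$ off row $i$; I would attempt the interchange $\left[\begin{smallmatrix}1&0\\0&1\end{smallmatrix}\right]\to\left[\begin{smallmatrix}0&1\\1&0\end{smallmatrix}\right]$ on $B$ that plants a $1$ at $(i,j)$ while moving the two displaced $1$'s onto positions where $C=0$. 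Such a move strictly lowers $c$ without leaving $\mc{A}(R,S)$, contradicting minimality.

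The main obstacle is that this single interchange can be blocked: writing $P=\{j':B_{ij'}=1,\ C_{ij'}=0\}$ and $Q=\{i':B_{i'j}=1,\ C_{i'j}=0\}$ for the admissible partner columns and rows, the move fails precisely when the submatrix $B[Q,P]$ consists entirely of $1$'s. This is exactly where the hypothesis $k\le k_i\le k+1$ is needed. In the blocked case I would not perform a single swap but trace an alternating sequence of rows and columns through the all-ones block, alternately applying interchanges to $B$ and to $C$, and show—by a counting argument on the row sums in which the near-regularity of $R_1$ forces the available capacities of any two rows to differ by at most $1$—that the sequence must terminate at an admissible swap. This produces a modification of $(B,C)$ that decreases $c$, again contradicting minimality. (Alternatively, one can keep $C$ fixed and recast the existence of $B_1$ with support disjoint from $\supp C$ as the existence of a bipartite $(R_1,S_1)$-factor in the graph of non-$C$ positions, decided by max-flow/min-cut; the near-regular row sums $R_1$ are then precisely what makes the governing min-cut inequality hold, with $C$ adjusted by interchanges when it does not.)
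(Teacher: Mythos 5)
First, a point of comparison: the paper does not prove this theorem at all. It is quoted as a known result of Anstee (generalizing a theorem of Brualdi and Ross; see Theorem 4.4.14 of \cite{RAB}), whose standard proof goes through Anstee's network-flow formulation. So your attempt cannot be measured against an in-paper argument; it has to stand on its own.

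It does not yet do so. The necessity direction, the reduction to finding $B\in\mc{A}(R,S)$ containing an entrywise submatrix $C\in\mc{A}(R_2,S_2)$, the minimal-conflict setup, and the identification of the blocked configuration (the submatrix $B[Q,P]$ consisting entirely of $1$'s) are all correct --- and indeed $P$ and $Q$ are automatically nonempty, since a conflict at $(i,j)$ forces $|P|\ge k_i+1$ and $|Q|\ge (S_1)_j+1$. But everything up to that point is routine and never uses the hypothesis $k\le k_i\le k+1$. The theorem is false without that hypothesis (the paper itself notes that nonemptiness of $\mc{A}(R,S)$ together with nonemptiness of the constituent classes is not sufficient in general), so the entire content of the proof is concentrated in the step you defer: ``trace an alternating sequence \dots\ and show --- by a counting argument \dots\ --- that the sequence must terminate at an admissible swap.'' As written this is a statement of intent, not an argument: you do not specify how the chain is extended, which quantity decreases, why the process cannot cycle, or --- most importantly --- where exactly the bound $|k_i-k_{i'}|\le 1$ is invoked to rule out a permanently blocked configuration. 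The same criticism applies to the parenthetical max-flow alternative: saying that ``the near-regular row sums are precisely what makes the governing min-cut inequality hold, with $C$ adjusted by interchanges when it does not'' names the right mechanism but proves nothing; verifying that every cut of that network has capacity at least $\sum_i k_i$ under the stated hypotheses \emph{is} the theorem. To complete the proof you would need either to carry out the alternating-chain analysis inside the all-ones block in full detail, or to set up the flow network explicitly and verify the cut condition --- which is essentially Anstee's original argument.
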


An immediate corollary of this theorem is the following.

\begin{corollary}\label{cor:joint}
Let $R=(r_1,r_2,\ldots,r_n)$ and $S=(s_1,s_2,\ldots,s_n)$ be nonnegative integral vectors. Let $e=(1,1,\ldots,1)$ where there are $n$ $1$'s. Let $R'
=R-e$ and $S'=S-e$. 
Then there exists a matrix $A=A_1-A_2$ in $\mc{A}^{\pm}(e,e)$ where $A_1\in\mc{A}(R,S)$ if and only if
 $\mc{A}(R,S)\ne\emptyset$ and $\mc{A}(R',S')\ne\emptyset$.
\end{corollary}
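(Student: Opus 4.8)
The plan is to read the statement through the correspondence, described just before Theorem \ref{th:joint}, between $(0,\pm 1)$-matrices and joint realizations, and then to invoke that theorem in the special case where the nearly constant vector is the all-ones vector $e$. Concretely, I would match the data of the corollary to the hypotheses of Theorem \ref{th:joint} by taking the ``total'' class to be $\mc{A}(R,S)$ and setting $R_1=S_1=e$; then $R_2=R-e=R'$ and $S_2=S-e=S'$. The point that makes this legitimate is that $e$ trivially satisfies the near-constancy condition ($k\le k_i\le k+1$ with $k=1$ and every $k_i=1$), so Theorem \ref{th:joint} applies verbatim and yields: $\mc{A}(R,S)$ admits an $(e,e;R',S')$ joint realization if and only if $\mc{A}(R,S)\ne\emptyset$ and $\mc{A}(R',S')\ne\emptyset$, which is exactly the right-hand condition of the corollary.

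For the forward (only if) direction I would argue directly. Suppose $A=A_1-A_2\in\mc{A}^{\pm}(e,e)$ with the $(0,1)$-matrix $A_1\in\mc{A}(R,S)$. Then $\mc{A}(R,S)\ne\emptyset$ is witnessed by $A_1$ itself, and $A_2=A_1-A$ is a $(0,1)$-matrix whose row sums are $R-e=R'$ and whose column sums are $S-e=S'$; hence $A_2\in\mc{A}(R',S')$ and $\mc{A}(R',S')\ne\emptyset$. No appeal to Theorem \ref{th:joint} is needed here.

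For the reverse (if) direction I would feed the two nonemptiness hypotheses into Theorem \ref{th:joint} as set up above to obtain an $(e,e;R',S')$ joint realization of $\mc{A}(R,S)$. Unwinding the definition, this realization is a pair $(P,B)$ of $(0,1)$-matrices with $P\sqcap B=\emptyset$, where $P\in\mc{A}(e,e)$ is a permutation matrix, $B\in\mc{A}(R',S')$, and $P+B\in\mc{A}(R,S)$. I would then set $A_1:=P+B$ and $A_2:=B$, so that $A_1\in\mc{A}(R,S)$, $A_2\in\mc{A}(R',S')$, and
\[
  A:=A_1-A_2=(P+B)-B=P\in\mc{A}(e,e)\subseteq\mc{A}^{\pm}(e,e),
\]
which is precisely a matrix of the required form.

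The main obstacle is not any hard estimate but the bookkeeping of the translation: one must correctly identify which class plays the role of the ``total'' $\mc{A}(R,S)$ in Theorem \ref{th:joint} (it is the sum class of the realization, not the class $\mc{A}^{\pm}(e,e)$ in which $A$ lives), verify that $e$ is an admissible nearly constant vector so that $R_2,S_2$ come out to be exactly $R',S'$, and check that the realization $(P,B)$ reassembles into a decomposition $A=A_1-A_2$ of the shape demanded by the corollary. Once these identifications are pinned down, both implications are immediate.
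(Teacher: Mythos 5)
The paper offers no written proof here---Corollary \ref{cor:joint} is simply announced as ``an immediate corollary'' of Theorem \ref{th:joint}---so your job was to supply the missing argument, and your specialization $R_1=S_1=e$, $R_2=R'$, $S_2=S'$ is certainly the intended application of that theorem, with a correct check of the near-constancy hypothesis; the forward direction is also fine. The difficulty is in how you reassemble the joint realization. In the paragraph preceding Theorem \ref{th:joint} the notation $A=A_1-A_2$ is reserved for the \emph{unique} decomposition of a $(0,\pm 1)$-matrix into $(0,1)$-matrices with $A_1\sqcap A_2=\emptyset$, i.e.\ $A_1$ is the positive part of $A$. Your pair $(A_1,A_2)=(P+B,B)$ is not disjoint: the matrix $A=A_1-A_2=P$ has positive part $P\in\mc{A}(e,e)$, not $P+B\in\mc{A}(R,S)$. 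So you have not produced a matrix of $\mc{A}^{\pm}(e,e)$ whose positive part lies in $\mc{A}(R,S)$.

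This matters because the two readings of the corollary behave very differently. Without the disjointness convention the statement is true but has no content: for \emph{any} $A_1\in\mc{A}(R,S)$ and \emph{any} $A_2\in\mc{A}(R',S')$ the difference $A_1-A_2$ is automatically a $(0,\pm 1)$-matrix with all row and column sums $1$, so the reverse implication needs no joint realization at all and your invocation of Theorem \ref{th:joint} is doing no work. With the disjointness convention the statement is equivalent to $\mc{A}(R+R',S+S')$ having an $(R,S;R',S')$ joint realization; Theorem \ref{th:joint} does not apply to that (neither $R$ nor $R'$ is nearly constant), and the statement is in fact false: for $n=2$ and $R=S=(2,2)$ both $\mc{A}(R,S)=\{J_2\}$ and $\mc{A}(R',S')$ (the $2\times 2$ permutation matrices) are nonempty, yet no matrix of $\mc{A}^{\pm}(e,e)$ has positive part $J_2$, since $\mc{A}(R+R',S+S')=\mc{A}((3,3),(3,3))$ is empty for $2\times 2$ matrices. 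So either the corollary is the trivial statement, in which case your proof is correct but the joint-realization machinery is superfluous, or it is the disjoint statement, in which case the step ``$A:=A_1-A_2=P$ is a matrix of the required form'' fails and no repair is possible because the claim itself is false. You need to decide which statement you are proving before the argument can be accepted.
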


\bigskip
{\bf Acknowledgment.} The authors  thank a referee for giving a number of useful comments and suggestions that improved the paper.

 \end{document}